\newtheorem{theorem}{Theorem}[section]
\newtheorem{proposition}[theorem]{Proposition}
\newtheorem{lemma}[theorem]{Lemma}
\theoremstyle{definition}
\newtheorem{example}[theorem]{Example}
\newtheorem{remark}[theorem]{Remark}
\newcounter{FNC}[page]
\def\fauxfootnote#1{{\addtocounter{FNC}{2}\Magenta{$^\fnsymbol{FNC}$}%
     \let\thefootnote\relax\footnotetext{\Magenta{$^\fnsymbol{FNC}$#1}}}}
\renewcommand{\qed}{\hfill\raisebox{-0.2pt}{\includegraphics{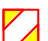}}}
\newcommand{\QED}{{\includegraphics{figures/QED.eps}}}
\newcommand{\relint}{\mathop{\rm relint}\nolimits}
\newcommand{\spec}{\mathop{\rm spec}\nolimits}
\newcommand{\pls}{\mathscr{P}^{\infty}}
\newcommand{\scrA}{{\mathscr{A}}}
\newcommand{\scrC}{{\mathscr{C}}}
\newcommand{\scrM}{{\mathscr{M}}}
\newcommand{\nca}{{\mathit{Nco}\mathscr{A}}}
\newcommand{\coscrA}{{\mathit{co}\mathscr{A}}}
\newcommand{\scrP}{{\mathscr{P}}}
\newcommand{\scrL}{{\mathscr{L}}}
\newcommand{\calA}{{\mathcal{A}}}
\newcommand{\calF}{{\mathcal{F}}}
\newcommand{\fraka}{{\mathfrak a}}
\newcommand{\frakm}{{\mathfrak m}}
\newcommand{\bm}{{\bf m}}
\newcommand{\bn}{{\bf n}}
\renewcommand{\P}{{\mathbb P}}
\newcommand{\C}{{\mathbb C}}
\newcommand{\K}{{\mathbb K}}
\newcommand{\N}{{\mathbb N}}
\newcommand{\Q}{{\mathbb Q}}
\newcommand{\R}{{\mathbb R}}
\newcommand{\T}{{\mathbb T}}
\newcommand{\U}{{\mathbb U}}
\newcommand{\Z}{{\mathbb Z}}
\newcommand{\bS}{{\mathbb S}}
\newcommand{\ini}{\mbox{\rm in}}
\newcommand{\nvol}{\mbox{\rm n-vol}}
\newcommand{\rank}{\mathop{\rm rank}\nolimits}
\newcommand{\sat}{\mbox{\rm sat}}
\newcommand{\Log}{\mathop{\rm Log}\nolimits}
\newcommand{\trop}{\mathop{\rm Trop}\nolimits}
\newcommand{\Trop}{\mathop\mathfrak{Trop}\nolimits}
\newcommand{\Ptrop}{\mathop{\scrP{\rm trop}}\nolimits}
\newcommand{\Arg}{\mathop{\rm Arg}\nolimits}
\newcommand{\Hom}{\mathop{\rm Hom}\nolimits}
\newcommand{\defcolor}[1]{\Blue{#1}}
\newcommand{\demph}[1]{\defcolor{{\sl #1}}}
\title{Non-archimedean coamoebae}
\author{Mounir Nisse}
\address{Mounir Nisse\\
          Department of Mathematics\\
         Texas A\&M University\\
         College Station\\
         Texas \ 77843\\
         USA}
\email{nisse@math.tamu.edu}
\urladdr{www.math.tamu.edu/\~{}nisse}
\author{Frank Sottile}
\address{Frank Sottile\\
         Department of Mathematics\\
         Texas A\&M University\\
         College Station\\
         Texas \ 77843\\
         USA}
\email{sottile@math.tamu.edu}
\urladdr{www.math.tamu.edu/\~{}sottile}
\thanks{Research of Sottile is supported in part by NSF grant DMS-1001615 and the Institut
  Mittag-Leffler.} 
\subjclass{14T05, 32A60} 
\begin{document}

\begin{abstract}
 A coamoeba is the image of a subvariety of a complex torus under the argument map
 to the real torus.
 Similarly, a non-archimedean coamoeba is the image of a subvariety of a torus over a
 non-archimedean field $\K$ with complex residue field under an argument map.
 The phase tropical variety is the closure of 
 the image under the pair of maps, tropicalization and
 argument.
 We describe the structure of non-archimedean coamoebae and phase tropical varieties 
 in terms of complex coamoebae and their phase limit sets.
 The argument map depends upon a section of the 
 valuation map, and we explain how this choice (mildly) affects the non-archimedean
 coamoeba.
 We also identify a class of varieties whose non-archimedean coamoebae and phase tropical
 varieties are objects from polyhedral combinatorics.
\end{abstract}

\maketitle

\begin{center}
Dedicated to the memory of Mikael Passare 1.1.1959---15.9.2011.
\end{center}

\section*{Introduction}

A coamoeba is the image of a subvariety $X$ of a complex torus $(\C^*)^n$ under the argument
map to the real torus $\defcolor{\U^n}:=\R^n/(2\pi\Z)^n$.
It is related to the amoeba, which is the image of $X$ under
the coordinatewise logarithm map $z\mapsto\log|z|$ to $\R^n$.
Amoebae were introduced by Gelfand, Kapranov, and Zelevinsky in 1994~\cite{GKZ}, and have
subsequently been widely studied~\cite{KeOk,Mi,PaRu,Pur}.
In contrast, coamoebae, which were introduced by Passare in a talk in 2004, have
only just begun to be studied and they appear to have many beautiful and interesting
properties. 
For example, the closure of the coamoebae of $X$ is the (finite) union of the coamoebae of
all its initial schemes, $\ini_wX$.
Those for $w\neq 0$ form the phase limit set~ of $X$~\cite{NS}, whose structure is controlled by
the tropical variety of $X$.

Suppose that $\K$ is an algebraically closed field with a non-trivial valuation 
$\nu\colon\K^*\to\Gamma$, where $\Gamma$ is a totally ordered divisible abelian group.
A variety $X$ in $(\K^*)^n$ has a non-archimedean amoeba, or tropicalization,
$\trop(X)\subset\Gamma^n$~\cite{IMS}. 
When $\K$ has residue field the complex numbers, there a version of the
argument map to $\U^n$ defined on $\K^*$, and the image of $X\subset(\K^*)^n$ under this
argument map is a \demph{non-archimedean coamoeba}.
Non-archimedean coamoebae have structure controlled by the tropical variety of $X$, which 
generalizes the structure of the phase limit set of a complex variety.

The closure of the image of $X$ in $\Gamma^n\times\U^n$ under the product of the
tropicalization and argument maps is the \demph{phase tropical variety} of $X$,
\defcolor{$\Ptrop(X)$}. 
For curves over the field of Puiseaux series, this notion was introduced and studied by
Mikhalkin~\cite[\S6]{Mik05}. 

These objects depend upon a section $\tau\colon\Gamma\to\K^*$ of the valuation map,
which allows the identification of all reductions of fibers of tropicalization 
$(\K^*)^n\to\Gamma^n$ with $(\C^*)^n$.
The identifications of a fiber over $w\in\Gamma^n$ for two different sections are
related by a translation $\alpha(w)\in(\C^*)^n$. 
Translations in different fibers can be significantly different.
For example, when 
$\dim_\Q\Gamma>1$, the map $\alpha\colon\Gamma^n\to(\C^*)^n$
may not be continuous.
We explain the (surprisingly mild) effect of a change of section on these objects.

A variety is tropically simple if its tropical reductions are pullbacks along surjections of
products of hyperplanes. 
The non-archimedean coamoeba of a tropically simple variety is essentially an object of
polyhedral combinatorics and may be realized as 
a union of products of zonotope complements in subtori glued together
along their phase limit sets.
The phase tropical variety of tropically simple variety has a similar description. 

In Section~\ref{S:defs}, we review the fundamental theorems of tropical geometry in a form that
we need, including a discussion of how the choice of section of valuation affects tropical
reductions, and review coamoebae.
We define non-archimedean coamoebae and phase tropical varieties in Section~\ref{S:main} and
establish their basic structure.
We close with Section~\ref{S:simple} which studies these objects for tropically simple
varieties. 

We thank Diane Maclagan, Sam Payne, Jan Draisma, and Brian Osserman who helped us
to understand the structure of tropical reductions.
We also posthumously thank Mikael Passarre for his friendship, guidance, and encouragement
of this line of research.

\section{Tropicalization and coamoebae}\label{S:defs}

Let \defcolor{$\K$} be an algebraically closed field equipped with a non-archimedean valuation
$\defcolor{\nu}\colon\K^*\twoheadrightarrow\Gamma$, where \defcolor{$\Gamma$} is a
divisible totally ordered group. 
We do not assume that $\Gamma$ is a subgroup of $\R$, for example
$\Gamma=\R^2$ with lexicographic order is suitable.
We extend $\nu$ to $\K$ by setting $\nu(0)=\infty$, which exceeds every
element of $\Gamma$.
Then
\[
   \nu(ab)\ =\ \nu(a)+\nu(b)
   \qquad\mbox{and}\qquad
   \nu(a+b)\ \geq\ \min\{\nu(a),\nu(b)\}\,,
\]
with equality when $\nu(a)\neq\nu(b)$.
Write $R$ for the value ring and $\frakm$ for its maximal ideal,
\[
   \defcolor{R}\ :=\ \{x\in\K\mid \nu(x)\geq 0\}
    \qquad\mbox{and}\qquad
   \defcolor{\frakm}\ :=\ \{x\in\K\mid \nu(x)> 0\}\,.
\]
The residue field of $\K$ is $\defcolor{\Bbbk}:=R/\frakm$, which is also algebraically closed.

%
\subsection{Tropicalization}

Let \defcolor{$N$} be a finitely generated free abelian group and 
$\defcolor{M}:=\Hom(N,\Z)$ its dual group. 
Write \defcolor{$\langle\cdot,\cdot\rangle$} for the pairing between $M$ and $N$.
The group ring \defcolor{$\Z[M]$} is the coordinate ring of a torus $\T_N$ whose
$\K$-points are $\K^*\otimes_\Z  N = \Hom(M,\K^*)$, the set of group homomorphisms
$M\to\K^*$.  
The valuation induces a map 
$\defcolor{\trop}\colon\T_N(\K)\to\defcolor{\Gamma_N}:=\Gamma\otimes_\Z N$.
Maps $N'\to N$ of free abelian groups functorially induce maps $\T_{N'}\to\T_N$ and
$\Gamma_{N'}\to\Gamma_N$. 

The points $x\in\T_N(\K)$ with valuation $w$, 
\[
   \{ x\colon M\to\K \mid \nu(x(\bm))=\langle\bm,w\rangle\}\,,
\]
form the fiber of $\T_N(\K)$ over $w\in\Gamma_N$.
For $w=0$ this is $\T_N(R)$, which has a natural reduction map onto $\T_N(\Bbbk)$.
We follow Payne~\cite{Payne} to understand the other fibers. 
Write \defcolor{$\xi^\bm$} for the element of $\K[M]$ corresponding to $\bm\in M$.
The \demph{tilted group ring} is
\[
   \defcolor{R[M]^w}\ :=\ \Bigl\{ \sum c_\bm \xi^\bm\ \in\K[M] \mid  
        \nu(c_\bm)+\langle\bm,w\rangle\geq 0\Bigr\}\,.
\]
If $\defcolor{\T^w}:=\spec R[M]^w$, then $\T^w(R)$ is the fiber of $\T_N(\K)$ over
$w\in\Gamma_N$. 
The residue map $\T^w(R)\to\T^w(\Bbbk)$ associates a point $x\in\T^w(R)$
canonically to its \demph{tropical reduction} $\defcolor{\overline{x}}\in\T^w(\Bbbk)$.

The map $c\,\xi^\bm\mapsto \xi^\bm\otimes c\,\xi^\bm$ induces a coaction 
$R[M]^w\to R[M]\otimes R[M]^w$, and so $\T^w$ is a $\T_N$-torsor over $R$.
Thus these fibers $\T^w(R)$ and $\T^w(\Bbbk)$ are non-canonically isomorphic to the tori
$\T_N(R)$ and $\T_N(\Bbbk)$.
We also have the \demph{exploded tropicalization} map 
\[
     \defcolor{\Trop}\ \colon\ \T_N(\K)\ \longrightarrow\ 
     \coprod_{w\in \Gamma_N}\T^w(\Bbbk)\,.
\]
This union is disjoint because there is no natural identification between the fibers of
$\trop$.

The \demph{tropicalization} $\defcolor{\trop(X)}\subset\Gamma_N$ of a
subscheme $X\subset\T_N$ is the image of $X(\K)$ under $\trop$.
Its \demph{exploded tropicalization $\Trop(X)$} is its image under $\Trop$.
The fiber of $\Trop(X)$ over a point $w\in\trop(X)$ is the 
\demph{tropical reduction $X_w$} of $X$, which is a subset of $\T^w(\Bbbk)$.
We consider this at the level of schemes.

The fiber of the $R$-scheme $\T^w$ over the closed point of $\spec R$ is a scheme
over the residue field $\Bbbk$ with coordinate ring 
$\defcolor{\Bbbk[M]^w}:= R[M]^w\otimes_R \Bbbk$.
This has a concrete description.
For $\gamma\in\Gamma$, let \defcolor{$\fraka^\gamma$} be the fractional ideal of $R$
consisting of those $x\in\K$ with valuation $\nu(x)\geq\gamma$.
Set
$\defcolor{\Bbbk^\gamma}:=\fraka^\gamma/(\frakm\cdot\fraka^\gamma)=\fraka^\gamma\otimes_R\Bbbk$, 
which is a one-dimensional $\Bbbk$-vector space.
Then
\[
   R[M]^w\ =\ \bigoplus_{\bm\in M} \fraka^{-\langle\bm,w\rangle}\xi^\bm
     \qquad\mbox{and}\qquad
   \Bbbk[M]^w\ =\ \bigoplus_{\bm\in M} \Bbbk^{-\langle\bm,w\rangle}\xi^\bm\,.
\]

Each point $w\in\Gamma_N$ determines a weight function on monomials in $\K[M]$,
\[
   c\,\xi^\bm\ \longmapsto\ \nu(c)+\langle\bm,w\rangle\,,
\]
for $c\in\K^*$ and $\bm\in M$.
If all terms of $f\in\K[M]$ have non-negative $w$-weight, then $f\in R[M]^w$,
and its image in $\Bbbk[M]^w$ is its \demph{tropical reduction $f_w$}.
The \demph{tropical reduction $I_w$} of an ideal $I\subset\K[M]$ is
\[
   I_w\ := \{ f_w\mid f\in I\cap R[M]^w\}\ \subset\ \Bbbk[M]^w\,.
\]
Surjectivity of tropicalization relates tropical reductions tropicalization.

\begin{proposition}\label{Prop:surjective}
 Suppose that $X$ is a $\K$-subscheme of\/ $\T_N$ with ideal $I\subset\K[M]$
 and let $w\in\Gamma_N$.
 Its tropical reduction $X_w$ is the $\Bbbk$-subscheme of\/ $\T^w$ defined by
 $I_w$. 
 This is nonempty exactly when $w\in\trop(X)$.
\end{proposition}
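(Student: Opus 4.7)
The plan is to prove both the identification of $X_w$ as the subscheme defined by $I_w$ in $\T^w(\Bbbk)$ and the nonemptiness criterion. Writing $V(I_w) \subseteq \T^w(\Bbbk)$ for this subscheme, the set-theoretic assertion amounts to the two inclusions $X_w \subseteq V(I_w)$ and $V(I_w) \subseteq X_w$, with the nonemptiness criterion following once these are in place.

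The easy half goes together. Given $x \in X(\K)$ with $\trop(x) = w$, the point $x$ is an $R$-point of $\T^w$ whose reduction $\bar x \in \T^w(\Bbbk)$ is defined. For any $f \in I \cap R[M]^w$, reducing the identity $f(x) = 0 \in R$ term by term through the graded decomposition
\[
   R[M]^w \ =\ \bigoplus_{\bm\in M}\fraka^{-\langle\bm,w\rangle}\xi^\bm
\]
yields $f_w(\bar x) = 0$ in $\Bbbk[M]^w$. Hence $\bar x \in V(I_w)$, so $X_w \subseteq V(I_w)$, and in particular $w \in \trop(X)$ forces $V(I_w) \neq \emptyset$.

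The substantial content is the reverse inclusion $V(I_w) \subseteq X_w$, which simultaneously gives the nontrivial implication $V(I_w) \neq \emptyset \Rightarrow w \in \trop(X)$. Given a closed point $\bar y \in V(I_w)$, one must lift it to some $x \in X(\K)$ with $\trop(x) = w$ and $\bar x = \bar y$. I would invoke Payne's scheme-theoretic version of the fundamental theorem of tropical geometry, formulated for algebraically closed non-archimedean $\K$ without the restriction $\Gamma \subseteq \R$, applied to the closure of $X$ in $\T^w$ cut out by $I \cap R[M]^w \subset R[M]^w$. The main obstacle is that the classical Einsiedler--Kapranov--Lind argument via limits of Puiseaux series breaks down when $\dim_\Q \Gamma > 1$; the remedy is precisely the scheme-theoretic viewpoint, where producing a lift of $\bar y$ amounts to showing that the generic fiber of a flat finite-type $R$-model is nonempty whenever the special fiber is, which is exactly Payne's lifting statement.
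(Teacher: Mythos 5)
The paper offers no proof of this proposition; immediately after stating it, it notes that this (together with Propositions~\ref{Prop:FTTG}, \ref{Prop:ini}, and~\ref{Prop:local}) is one of the fundamental theorems of tropical geometry and simply refers the reader to the survey of Osserman--Payne and to Gubler's account. Your sketch correctly identifies the content of such a proof---the containment $X_w \subseteq V(I_w)$ by reducing the identity $f(x)=0$ through the graded decomposition of $R[M]^w$, and the converse via Payne's scheme-theoretic lifting result valid for general value groups $\Gamma$---so it matches the approach of the sources the paper cites.
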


This, as well as Propositions~\ref{Prop:FTTG},~\ref{Prop:ini}, and~\ref{Prop:local} are the
fundamental theorems of tropical geometry, and are due to many authors.
For a discussion with references, see Section 2.2 of~\cite{OP}.
Another complete source presented in refreshing generality is~\cite{Gubler}.

%
\subsection{Structure of tropicalization}\label{S:polyhedron}

The usual notions of rational polyhedral complexes in $\R^n$ carry over
without essential change to $\Gamma_N$.
(But when $\Gamma\not\subset\R$, they are not easily interpreted as ordinary polyhedral
complexes~\cite{Ban11}.) 
For example, a (\demph{$\Gamma$-rational}) \demph{polyhedron} 
is a subset $\sigma$ of $\Gamma_N$ which is the intersection of finitely many half-spaces of
the form 
 \begin{equation}\label{Eq:half-space}
   \{w \in \Gamma_N\ \mid\ \langle\bm,w\rangle\ \geq\ a\}\,,
 \end{equation}
where $a\in\Gamma$ and $\bm\in M$.
If all constants $a$ in~\eqref{Eq:half-space} are $0$, then $\sigma$ is a \demph{cone}.
A \demph{face} of a polyhedron $\sigma$ is either $\sigma$ itself or the intersection of
$\sigma$ with a hyperplane $\{ w \mid \langle\bm,w\rangle = a\}$, 
where  $a,\bm$ define a half-space~\eqref{Eq:half-space} containing $\sigma$.
The \demph{relative interior}, \demph{$\relint(\sigma)$} of a polyhedron $\sigma$ is the
set-theoretic difference of $\sigma$ and its proper faces.

The intersection of all hyperplanes containing the differences of elements in a
polyhedron $\sigma$ is a subgroup of $\Gamma_N$ which 
has the form $\Gamma_{N'}$ for a sublattice $N'$ of $N$, written as $\langle\sigma\rangle$.
The rank of this sublattice $\langle\sigma\rangle$ is the \demph{dimension} of $\sigma$.
For any $w\in\sigma$, we have $\sigma\subset w+\Gamma_{\langle\sigma\rangle}$.

A \demph{polyhedral complex $\scrC$} is a finite union of polyhedra, called \demph{faces} of
$\scrC$, such that if $\sigma\in\scrC$, then all faces of $\sigma$ lie in $\scrC$, and if
$\sigma,\sigma'\in\scrC$, then either $\sigma\cap\sigma'$ is empty or it is a common face of
both polyhedra. 
A \demph{fan} is a polyhedral complex in which all faces are cones.
A polyhedral complex has \demph{pure dimension $d$} if every maximal face has
dimension $d$.
The \demph{support} of a polyhedral complex is the union of its faces.

The minimal face containing a point $w$ in the support of $\scrC$ 
is the unique face $\sigma$ with $w\in\relint(\sigma)$.
For every face $\rho$ of $\scrC$ containing $w$, there is a cone $\rho_w$ such that 
$\rho_w+w$ is the intersection of those half-spaces containing $\rho$ whose boundary
hyperplane contains $w$.  
The union of these cones $\rho_w$ for faces $\rho$ of $\scrC$ containing $w$ forms the
\demph{local fan $\scrC_w$} of $\scrC$ at $w$.
These cones all contain the subgroup $\Gamma_{\langle\sigma\rangle}$.

We state the basic structure theorem of tropical varieties.

\begin{proposition}\label{Prop:FTTG}
  Let $X$ be an irreducible $\K$-subscheme of\/ $\T_N$ of dimension $d$.
  Then  $\trop(X)$ is the support of a connected polyhedral complex 
  $\scrC$ in $\Gamma_N$  of pure dimension $d$.
\end{proposition}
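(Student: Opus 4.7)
The plan is to establish three separate assertions about $\trop(X)$: that it carries a polyhedral complex structure, that the complex has pure dimension $d$, and that its support is connected.

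For the polyhedral structure, I would proceed via tropical Gröbner theory applied to the defining ideal $I\subset\K[M]$ of $X$. Using Proposition~\ref{Prop:surjective}, a point $w\in\Gamma_N$ lies in $\trop(X)$ exactly when the tropical reduction $I_w$ contains no monomial. The finiteness result due to Maclagan--Sturmfels (extended to a general divisible group $\Gamma$ by Aroca and Gubler) says that as $w$ varies over $\Gamma_N$, the ideal $I_w\subset\Bbbk[M]^w$ takes only finitely many isomorphism types, and the set of $w$ giving a fixed reduction is the relative interior of a $\Gamma$-rational polyhedron (a ``Gröbner cell''). These cells and their faces form a polyhedral complex on $\Gamma_N$, and the subcomplex $\scrC$ consisting of those cells whose reduction contains no monomial has support $\trop(X)$. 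Working with a tropical basis ensures this subcomplex is genuinely described by finitely many inequalities of the form~\eqref{Eq:half-space}.

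For pure dimension $d$, the inequality $\dim\scrC\leq d$ comes from the fact that tropicalization is a composition of $\nu$-weighted degenerations and cannot raise dimension: a face of dimension greater than $d$ would force some $X_w$ to have dimension exceeding $d$, contradicting that the tropical reduction is a flat specialization of the irreducible $d$-dimensional scheme $X$. The reverse inequality at a point $w\in\trop(X)$ is obtained by choosing a maximal-dimensional component of $X_w$ and lifting its torus-translate directions to a $d$-dimensional cone in the local fan $\scrC_w$; concretely, one uses that for generic $w$ the reduction $X_w$ is itself a torus translate of a subvariety whose own tropicalization supplies the required cone.

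Connectedness is the main obstacle. I would proceed by induction on $d=\dim X$. The base case $d=1$ reduces, after passing to a projective completion and normalization, to the fact that a smooth irreducible curve over $\K$ has connected $\K$-points in the Berkovich sense, so its tropicalization is the continuous image of a connected set. The inductive step uses a Bertini-type argument: choose a generic hypersurface $H\subset\T_N$ whose Newton polytope is large enough that $\trop(H)$ meets every pair of maximal cones of $\scrC$ in a common codimension-one face. Genericity ensures $X\cap H$ is irreducible of dimension $d-1$ and that $\trop(X\cap H)=\trop(X)\cap\trop(H)$; by the induction hypothesis the left-hand side is connected, which forces any two maximal cones of $\scrC$ to be joined through a chain of codimension-one adjacencies. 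The technical heart of this step, and the hardest point in the plan, is justifying the equality $\trop(X\cap H)=\trop(X)\cap\trop(H)$ for generic $H$ when $\Gamma$ is not a subgroup of $\R$, for which I would cite the general framework of Gubler or Osserman--Payne.
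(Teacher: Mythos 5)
The paper does not give a proof of Proposition~\ref{Prop:FTTG}: it is presented, together with Propositions~\ref{Prop:surjective}, \ref{Prop:ini} and \ref{Prop:local}, as one of ``the fundamental theorems of tropical geometry,'' and the reader is referred to Section 2.2 of~\cite{OP} and to~\cite{Gubler}. So there is no internal argument against which to compare yours; what follows is an assessment of the proposal on its own terms.

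Your outline for polyhedrality is the standard one (Gr\"obner cells after Maclagan--Sturmfels, extended to general $\Gamma$), and your upper bound on dimension via flat degeneration is fine. For the lower bound you appeal to the local fan $\scrC_w$ being $\trop(X_w)$, but $X_w$ is itself an irreducible $d$-dimensional scheme, now over $\C$ with trivial valuation, so you are quietly invoking the Bieri--Groves theorem for the trivially valued case rather than the general statement you are trying to prove. That is not circular if you say it, but as written it reads as if the same proposition is being re-used. The serious gap is in connectedness. First, the base case is misstated: ``the $\K$-points are connected in the Berkovich sense'' is not a meaningful assertion --- the set $X(\K)$ is not what is connected; it is the analytification $X^{\mathrm{an}}$, and when $\Gamma\not\subset\R$ (a case the paper explicitly allows) the Berkovich formalism requires nontrivial extension. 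Second, the inductive step does not close. Knowing that $\trop(X\cap H)$ is connected and that $\trop(X\cap H)=\trop(X)\cap\trop(H)$ only contradicts a disconnection $\trop(X)=A\sqcup B$ if $\trop(H)$ meets \emph{both} $A$ and $B$. The phrase ``$\trop(H)$ meets every pair of maximal cones of $\scrC$ in a common codimension-one face'' is not a coherent condition and, more to the point, a generic translate of a fixed hypersurface $H$ gives $\trop(H)$ as a fixed translate of a fan, which can easily miss a bounded component of $\trop(X)$. You would need to engineer $H$ so that $\trop(H)$ is guaranteed to hit every component, and you give no mechanism for this. The proofs in the sources the paper cites avoid this entirely: they obtain connectedness directly from connectedness of the analytification (Einsiedler--Kapranov--Lind, Cartwright--Payne, or Gubler's version for higher-rank value groups), rather than by Bertini-type induction on dimension.
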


\begin{remark}\label{R:scrC}
 We may partially understand the polyhedral complex $\scrC$ as follows.
 A polynomial $f\in\K[M]$ may be written
 \begin{equation}\label{Eq:polyF}
  f\ =\ \sum_{\bm\in\calA(f)} c_{\bm}\xi^\bm
  \qquad c_\bm\in\K^*\,,
 \end{equation}
 where the support $\calA(f)$ of $f$ is a finite subset of $M$.
 For $w\in\Gamma_N$, let \defcolor{$\calA(f)_w$} be the points of $\calA(f)$ indexing terms
 of~\eqref{Eq:polyF} with minimum $w$-weight, $\nu(c_\bm)+\langle \bm,w\rangle$.
 When this common minimum is $0\in\Gamma$, the tropical reduction of $f$ is
\[
    f_w\ =\ \sum_{\bm\in\calA(f)_w} \overline{c_{\bm}}\xi^\bm\ \
    \in\ \Bbbk[M]^w\,.
\]

 The polyhedral complex $\scrC$ depends upon 
 an embedding of $\T_N$ as the dense torus in a projective space $\P^N$.
 This corresponds to a choice 
 of generators of $M$, which gives a polynomial subalgebra $\K[\N^n]$ 
 ($n=\rank(N)$) of $\K[M]$.
 Given a $\K$-subscheme $X\subset\T_N$ defined by an ideal $I\subset\K[M]$, 
 let $\widetilde{I}$ be the homogenization of $I\cap \K[\N^n]$, which defines the 
 closure of $X$ in $\P^N$.
 Then polyhedra $\sigma\in\scrC$ have the property that for $w,u\in\relint(\sigma)$, 
 $\calA(f)_w=\calA(f)_u$, for every $f\in\widetilde{I}$.
 This implies the same for every $f\in I$.
\qed
\end{remark}

 We next relate tropical reductions to 
 the faces of $\scrC$.
 This requires that we compare tropical reductions at different points of $\Gamma_N$.

%
\subsection{Trivializations of tropicalization}

The fibers $\T^w(\Bbbk)$ of tropicalization may be identified so that $\Trop(X)$
becomes a subset of $\Gamma_N\times\T^w(\Bbbk)$---but at the price of a choice
of a homomorphism $\tau\colon\Gamma\to\K^*$ that is a section of the valuation map.
As David Speyer pointed out, such sections always exist.
In the exact sequence
\[
   \{1\}\ \longrightarrow\ \ker(\nu)\ \longrightarrow\ 
    \K^*\ \stackrel{\nu}{\longrightarrow}\ \Gamma\ \longrightarrow\ \{0\}\,,
\]
the group $\ker(\nu)$ is divisible as $\K$ is algebraically closed.
Therefore $\nu$ has a section.

A section $\defcolor{\tau}\colon\Gamma\to\K^*$ of the valuation map induces a section
of tropicalization $\defcolor{\tau_N}\colon\Gamma_N\to\T_N(\K)$ via
$\tau_N(w)\colon\bm\mapsto\tau(\langle \bm,w\rangle)$.
This is a homomorphism, $\tau_N(w+u)=\tau_N(w)\tau_N(u)$.
Write \defcolor{$\tau^w$} for $\tau_N(w)$, which is an element of $\T^w(R)$.
We have an identification
\[
  \varphi_{\tau,w}\ \colon\ \T_N(R)\ \xrightarrow{\ \sim\ }\ \T^w(R)
   \qquad\mbox{via}\qquad 
   x\ \longmapsto\ x\cdot \tau^w\,.
\]
Similarly, $\overline{x}\mapsto \overline{x}\cdot\overline{\tau^w}$ 
identifies $\T_N(\Bbbk)$ with $\T^w(\Bbbk)$.
We call this a \demph{trivialization of tropicalization}.
We obtain the identification of exploded tropicalization
 \begin{equation}\label{E:trivialization}
   \varphi_\tau^{-1}\ \colon\ 
   \coprod_{w\in\Gamma_N}T^w(\Bbbk)\  \xrightarrow{\ \sim\ }\ \Gamma_N\times \T_N(\Bbbk)
   \qquad\mbox{via}\qquad 
    T^w(\Bbbk)\ni\overline{x}\ \longmapsto\ 
     (w,\overline{\tau^{-w}}\cdot\overline{x})\,.
 \end{equation}

Let us fix a section $\tau\colon\Gamma\to\K^*$, and use it to
identify all tropical reductions $T^w(\Bbbk)$ with $\T_N(\Bbbk)$.
That is, we set $\defcolor{X_{w,\tau}}\subset\T_N(\Bbbk)$ to be 
$\overline{\tau^{-w}}\cdot X_w$.
A consequence of this choice is the following proposition. 

\begin{proposition}\label{Prop:ini}
  Let $X\subset\T_N$ be a $\K$-subscheme and suppose that $\scrC$ is a polyhedral complex as
  in Remark~$\ref{R:scrC}$.
  For any face $\sigma\in\scrC$, the tropical reductions $X_{w,\tau}$ and $X_{u,\tau}$ are
  equal, for any $w,u\in\relint(\sigma)$.
\end{proposition}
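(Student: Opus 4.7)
The plan is to pass to coordinate rings and show that the ideals defining $X_{w,\tau}$ and $X_{u,\tau}$ in $\Bbbk[M]$ coincide. By Proposition~\ref{Prop:surjective}, $X_w$ is cut out by $I_w\subset\Bbbk[M]^w$, and under the trivialization~\eqref{E:trivialization} the subscheme $X_{w,\tau}\subset\T_N(\Bbbk)$ is defined by the pullback ideal $I_{w,\tau}\subset\Bbbk[M]$. Unpacking the identification $\varphi_{\tau,w}\colon x\mapsto x\cdot\tau^w$, a short calculation shows that the pullback on coordinate rings sends a class $\overline{c\,\xi^\bm}\in\Bbbk[M]^w$ to $\overline{c\,\tau(\langle \bm,w\rangle)}\,\xi^\bm\in\Bbbk[M]$. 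So the task becomes showing $I_{w,\tau}=I_{u,\tau}$.

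I would produce a common generating set for these ideals indexed by $f\in I$. Given $f=\sum_\bm c_\bm\xi^\bm\in I$, set $\gamma_w(f):=\min_{\bm\in\calA(f)}(\nu(c_\bm)+\langle \bm,w\rangle)\in\Gamma$; then $\tau(-\gamma_w(f))\cdot f$ lies in $I\cap R[M]^w$ and has minimum $w$-weight zero, so its tropical reduction pushed forward to $\Bbbk[M]$ is
\[
  g_{f,w}\;=\;\sum_{\bm\in\calA(f)_w}\overline{c_\bm\,\tau(\langle \bm,w\rangle-\gamma_w(f))}\,\xi^\bm.
\]
As $f$ ranges over $I$, the $g_{f,w}$ generate $I_{w,\tau}$. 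Remark~\ref{R:scrC} gives $\calA(f)_w=\calA(f)_u$ for all $f\in I$ when $w,u\in\relint(\sigma)$, so $g_{f,w}$ and $g_{f,u}$ are supported on the same monomials.

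The key identity is that for $\bm$ in this common minimizing support, the exponent simplifies to $\langle \bm,w\rangle-\gamma_w(f)=-\nu(c_\bm)$, an expression in $f$ and $\bm$ alone that is independent of $w$. Hence $g_{f,w}=g_{f,u}$ term by term, so $I_{w,\tau}=I_{u,\tau}$ and therefore $X_{w,\tau}=X_{u,\tau}$. The main obstacle is bookkeeping: establishing the explicit formula for $\varphi_{\tau,w}^*$ and tracking the $w$-dependent normalization by $\tau(-\gamma_w(f))$ require a careful use of the section $\tau$. Once these are in place, the cancellation that eliminates $w$ from the coefficients is exactly the content of Remark~\ref{R:scrC}, namely that the weight-minimizing support $\calA(f)_w$ is constant on $\relint(\sigma)$.
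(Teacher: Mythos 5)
Your proof is correct and follows essentially the same route as the paper's own explanation (the remark immediately after the proposition): normalize $f$ by $\tau(-\gamma_w(f))$, push through $\varphi_{\tau,w}^*$, and use $\calA(f)_w=\calA(f)_u$ together with the identity $\langle\bm,w\rangle-\gamma_w(f)=-\nu(c_\bm)$ on the minimizing support to see the resulting reductions are literally the same polynomial. Your version is a bit more explicit about generating $I_{w,\tau}$, but the mechanism and the key cancellation are identical to the paper's.
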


We omit the section $\tau$ in our notation when it is understood.
For $\sigma$ a face of $\scrC$, we write \defcolor{$X_\sigma$} (or $X_{\sigma,\tau}$) for this
common tropical reduction at points of $\relint(\sigma)$.

\begin{remark}
 We explain this at the level of schemes.
 Let $I\subset\K[M]$ be the ideal of $X$ and 
 suppose that $f\in I$ as in~\eqref{Eq:polyF} with support $\calA(f)$.
 If $w(f)$ is the minimal $w$-weight of a term of $f$, then the polynomial
 $\tau(-w(f))\cdot f$ lies in $I\cap R[M]^w$ and
\[
   \overline{\varphi^*_{\tau,w}(\tau(-w(f))\cdot f)}\ =\ 
   \sum_{\bm\in\calA(f)_w} \overline{c_\bm}\cdot
       \overline{\tau(\langle\bm,w\rangle-w(f))}\cdot \xi^\bm\ \in\ \Bbbk[M]
\]
 lies in the ideal of $X_{w,\tau}=\overline{\tau^{-w}}\cdot X_w\subset\T_N(\Bbbk)$.
 If $\calA(f)_w=\calA(f)_u$, then an easy calculation shows that
 $\langle\bm,w\rangle-w(f)=\langle\bm,u\rangle-u(f)$, and thus
\[
   \overline{\varphi^*_{\tau,w}(\tau(-w(f))\cdot f)}\ =\ 
   \overline{\varphi^*_{\tau,u}(\tau(-u(f))\cdot f)}.
\]
 As faces $\sigma$ of $\scrC$ have the property that for any $w,u\in\relint(\sigma)$
 and any $f\in I$, we have  $\calA(f)_w=\calA(f)_u$, this explains Proposition~\ref{Prop:ini}. 
\qed
\end{remark}

%
\subsection{Local structure of tropicalization}

Let $K$ be an algebraically closed field.
Define the initial form $\ini_w f$, initial ideal $\ini_w I$, and initial scheme $\ini_w
X$ for $f\in K[M]$, $I$ an ideal of $K[M]$, and $X$ a $K$-subscheme of $\T_N$ as in the theory
of Gr\"obner bases~\cite{GBCP}. 
For example, $\ini_w f$ is the sum of terms of $f$ on whose exponents the
function $\bm\mapsto\langle\bm,w\rangle$ is minimized.

The \demph{tropicalization $\trop(X)$} of a $K$-subscheme of $\T_N$ is the set of 
$w\in\Gamma_N$ such that $\ini_w X$ is non-empty.
This corresponds to the usual tropicalization of $X(\K)$, where $\K$ is any non-archimedean
valued field containing $K$ whose valuation extends the trivial valuation on $K$ and whose
value group is $\Gamma$.
Choosing a projective space closure $\P^N$ of $\T_N$, $\trop(X)$ is the support of a
rational polyhedral fan in $\Gamma_N$, which is a subfan of the Gr\"obner fan of the
closure of $\overline{X}$. 
Call this subfan $\Sigma$ a \demph{Gr\"ober-tropical fan} of $X$.
All elements in the relative interior of a cone $\sigma$ of $\Sigma$ induce the same initial
scheme, which we write as $\ini_\sigma X$.\smallskip

Let us return to the situation of the previous subsections.
That is, $X$ is a $\K$-subscheme of $\T_N$ and $\K$ is a field with non-archimedean valuation
$\nu$, value group $\Gamma$, and residue field $\Bbbk$.
Fix a section $\tau$ of the valuation map and a projective space closure $\P^N$ of $\T_N$.
Then there is a polyhedral complex $\scrC$ in $\Gamma_N$ whose support is $\trop(X)$.
For $w\in\trop(X)$, consider the local fan $\scrC_w$ of $\scrC$ at $w$.
The tropical reduction $X_w$ is a $\Bbbk$-subscheme of $\T_N$.
Equipping $\Bbbk$ with a
trivial valuation to $\Gamma$, the  Gr\"obner-tropical fan $\Sigma$ of $X_w$ is a second fan
associated to $w\in\trop(X)$, and its support is the tropicalization $\trop(X_w)$  of $X_w$.

\begin{proposition}\label{Prop:local}
  We have $\scrC_w=\Sigma$.
  For any polyhedron $\rho$ of $\scrC$ containing $w$, we have
\[
    X_\rho \ =\ \ini_{\rho_w}(X_w)\,.
\]
\end{proposition}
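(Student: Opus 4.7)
The plan is to reduce both assertions to a single scheme-level identity $X_\rho = \ini_{\rho_w}(X_w)$ for each face $\rho \in \scrC$ containing $w$; the fan equality $\scrC_w = \Sigma$ then follows because the bijection $\rho \mapsto \rho_w$ sends faces of $\scrC$ through $w$ to cones of $\scrC_w$, on whose relative interiors the initial degeneration of $X_w$ is constant with values matching precisely the data defining $\Sigma$. Fix such a face $\rho$, pick $u \in \relint(\rho)$, and set $v := u - w$; the strict inequalities defining $\relint(\rho)$ at $u$ translate termwise into strict inequalities among the defining halfspaces of $\rho_w$, placing $v \in \relint(\rho_w)$.

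The heart of the argument is the polynomial identity $f_{u,\tau} = \ini_v f_{w,\tau}$ in $\Bbbk[M]$ for every $f \in I$, written as in~\eqref{Eq:polyF}. For supports, Remark~\ref{R:scrC} implies that $\scrC$ refines the Newton subdivision of $\Gamma_N$ associated to each $f$; since smaller Newton faces correspond to larger supports $\calA(f)_\bullet$, we obtain $\calA(f)_u \subseteq \calA(f)_w$. For $\bm \in \calA(f)_w$ the $u$-weight equals $w(f) + \langle \bm, v\rangle$, so $\calA(f)_u$ is precisely the subset of $\calA(f)_w$ minimizing $\langle\cdot, v\rangle$, which matches the support of $\ini_v f_{w,\tau}$. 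For coefficients, the formula from the remark following Proposition~\ref{Prop:ini} gives the trivialized coefficient of $\xi^\bm$ in $f_{w,\tau}$ as $\overline{c_\bm\,\tau(\langle\bm,w\rangle - w(f))}$; for $\bm \in \calA(f)_u$, both $\langle\bm,w\rangle - w(f)$ and $\langle\bm,u\rangle - u(f)$ equal $-\nu(c_\bm)$, so the coefficients coincide termwise.

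Combining the two comparisons, $f_{u,\tau} = \ini_v f_{w,\tau}$ for all $f \in I$, giving the ideal containment $I_{u,\tau} \subseteq \ini_v I_{w,\tau}$. To promote this to the scheme identity $X_\rho = \ini_{\rho_w}(X_w)$, I would invoke the Gr\"obner/tropical basis machinery over valued fields that underlies Propositions~\ref{Prop:surjective}--\ref{Prop:ini}: under the projective compactification fixing $\scrC$, a universal Gr\"obner basis of $\widetilde{I}$ reduces to a generating set of $I_{w,\tau}$ whose $\ini_v$-initial forms generate $\ini_v I_{w,\tau}$, and these initial forms already equal the corresponding generators of $I_{u,\tau}$ by the previous paragraph, forcing ideal equality. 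The fan identity $\scrC_w = \Sigma$ then follows: both are fan structures on $\trop(X_w)$ whose cones are loci of constancy of $v \mapsto \ini_v(X_w)$, and the bijection $\rho \mapsto \rho_w$ matches their face lattices.

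The main obstacle is precisely the ideal-level upgrade, since $\ini_v$ does not commute with arbitrary generating sets; establishing it cleanly requires a universal Gr\"obner basis whose reductions behave well both for the $\K$-valuation at $w$ and for the trivial $\Bbbk$-valuation used to build $\Sigma$. A secondary delicacy is that $\Gamma$ need not be Archimedean, so the support inclusion $\calA(f)_u \subseteq \calA(f)_w$ must be justified via the polyhedral refinement in Remark~\ref{R:scrC} rather than a continuous interpolation $w_t = (1-t)w + tu$, where the existence of "sufficiently small $t$" could fail in a general totally ordered divisible abelian group.
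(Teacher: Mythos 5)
The paper does not actually prove Proposition~\ref{Prop:local}: it groups it with Propositions~\ref{Prop:FTTG} and~\ref{Prop:ini} as ``the fundamental theorems of tropical geometry'' and defers to the literature (\cite{OP}, \cite{Gubler}). So there is no model proof to compare your argument against; it has to be judged on its own. Your polynomial-level computation is correct and is the standard approach one finds in those references: granting that $\scrC$ refines the Newton subdivision of each $f\in I$ (which is the content of Remark~\ref{R:scrC}), the support inclusion $\calA(f)_u\subseteq\calA(f)_w$ for $u\in\relint(\rho)$ and $w\in\rho$ holds, the $u$-weight of $\bm\in\calA(f)_w$ splits as $w(f)+\langle\bm,v\rangle$ with $v=u-w\in\relint(\rho_w)$, and the $\tau$-normalized coefficients agree, giving $f_{u,\tau}=\ini_v(f_{w,\tau})$. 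Your caution about deriving the support inclusion from the polyhedral refinement rather than from an archimedean interpolation is a good instinct, and the refinement argument you sketch does work: $\rho$ sits inside a cell $C$ of the Newton subdivision with $\relint(\rho)\subseteq\relint(C)$, and then $w\in C$ forces $\calA(f)_w\supseteq\calA_C=\calA(f)_u$ directly from the inequalities cutting out $C$, with no limiting argument needed.

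The one genuine misstep is that the ``main obstacle'' you identify --- lifting the polynomial identity to the ideal equality $I_{u,\tau}=\ini_v(I_{w,\tau})$ via universal Gr\"obner or tropical bases --- does not actually arise. The tropical reduction $I_w$ is defined in the paper not as the ideal generated by the reductions of some generating set, but as the full \emph{set} $\{f_w : f\in I\cap R[M]^w\}$, which one checks is already an ideal. Since your identity $\ini_v(f_{w,\tau})=f_{u,\tau}$ holds for \emph{every} $f\in I$ and not merely for a chosen basis, both inclusions fall out immediately: every generator $\ini_v(g)$ of $\ini_v(I_{w,\tau})$ is of the form $\ini_v(f_{w,\tau})=f_{u,\tau}\in I_{u,\tau}$, while conversely every $h\in I_{u,\tau}$ has the form $f_{u,\tau}=\ini_v(f_{w,\tau})$ with $f_{w,\tau}\in I_{w,\tau}$, hence $h\in\ini_v(I_{w,\tau})$. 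The familiar obstruction --- that $\ini_v$ need not commute with a finite generating set --- is therefore irrelevant here, and the argument closes without Gr\"obner bases. The only place Gr\"obner-theoretic input genuinely enters is in the fan equality $\scrC_w=\Sigma$: to rule out $\scrC_w$ being a strict refinement of $\Sigma$, one needs that $\scrC$ was constructed from the Gr\"obner fan of a projective closure as in Remark~\ref{R:scrC}, so that distinct faces of $\scrC$ through $w$ give distinct tropical reductions; this is precisely the convention the paper fixes before the statement, and is worth invoking explicitly.
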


Here, $X_\rho$ is the tropical reduction of $X$ at any point in the relative interior of
$\rho$, while  $\ini_{\rho_w}(X_w)$ is the initial scheme of the tropical reduction $X_w$
corresponding to the cone $\rho_w$ of the local fan $\scrC_w$  at $w$, equivalently, the
Gr\"obner-tropical fan of $X_w$.

Lastly, we recall a fact from the theory of Gr\"obner bases.
Suppose that $\sigma$ is a cone of the Gr\"obner fan of a projective scheme $Y$.
Then $\langle\sigma\rangle$ is a sublattice of $N$ of rank equal to the dimension of $\sigma$,
and the corresponding torus $\T_{\langle\sigma\rangle}\subset\T_N$ acts on $\ini_\sigma Y$.
In particular, when $X\subset\T_N$ is irreducible of dimension $d$ and $\sigma$ is a
$d$-dimensional polyhedron in a polyhedral complex $\scrC$ representing $\trop(X)$, then 
the tropical reduction $X_\sigma$ is a $d$-dimensional subscheme of $\T_N$ on which 
the $d$-dimensional subtorus $\T_{\langle\sigma\rangle}$ acts, 
so that $X_\sigma$ is a finite union of $\T_{\langle\sigma\rangle}$-orbits.

%
\subsection{Change of trivialization}\label{S:change}

We now examine the effect on the tropical reductions of a different choice of section
$t\colon\Gamma\to\K^*$ of the valuation homomorphism. 
This differs from $\tau$ by a group homomorphism 
$\defcolor{\alpha}:=t/\tau\colon\Gamma\to \defcolor{R^*}:=R\setminus\frakm$.
This gives a homomorphism $\alpha_N\colon \Gamma_N\to\T_N(R)$ with value
$\alpha^w:=\alpha_N(w)\in\T_N(R)$ at $w\in \Gamma_N$ defined by $t^w=\alpha^w\cdot\tau^w$.
Trivializations induced by $t$ and $\tau$ in $\T^w(\Bbbk)$ are translates of each
other by the reduction $\overline{\alpha^w}$ of $\alpha^w$.
Conversely, any homomorphism $\alpha\colon\Gamma\to R^*$ gives a section
$\alpha\cdot\tau\colon\Gamma\to\K^*$.

\begin{example}\label{ex:pathology}
 Suppose that $\Gamma=\R$ and the residue field is $\C$, which is a
 subfield of $\K$.
 Then any homomorphism $\alpha\colon\R\to\C$ gives a
 section $\alpha\cdot\tau$ of the valuation.
 Suppose that $N=\Z^n$, then the two identifications~\eqref{E:trivialization} of exploded
 tropicalization with $\R^n\times(\C^*)^n$ differ by the map
\[
   \R^n\times(\C^*)^n\ \ni\ (w,x)\ \longmapsto\ 
   (w,\overline{\alpha^w}\cdot x)\ \in\ \R^n\times(\C^*)^n\,.
\]
 When $\alpha$ is discontinuous, this will be discontinuous, and so it is not 
 clear that any reasonable structure can arise from the choice of a section and the
 corresponding trivialization. \qed
\end{example}

Despite this, the different trivializations behave reasonably 
well on fibers of tropicalization.

Let $X\subset\T_N$ be a $\K$-subscheme.
For any $w\in\Gamma_N$, the two tropical reductions are related by the translation
 \begin{equation}\label{Eq:translation}
   X_{w,\tau}\ =\ \overline{\alpha^w}\cdot X_{w,t}\,.
 \end{equation}
Fix a polyhedral complex $\scrC$ supported on $\trop(X)$ as in Remark~\ref{R:scrC}.
For any face $\sigma$ of $\scrC$ containing $w$, we have 
$\sigma\subset w+\Gamma_{\langle\sigma\rangle}$, and so
\[
   \overline{\alpha}\ \colon\ \sigma\ \longrightarrow\ 
   \overline{\alpha^w}\cdot\overline{\alpha}(\Gamma_{\langle\sigma\rangle})
   \ =\ \overline{\alpha^w}\cdot\T_{\langle\sigma\rangle}(\Bbbk)\,,
\]
 as $\overline{\alpha}\colon \Gamma_{\langle\sigma\rangle}\to\T_{\langle\sigma\rangle}(\Bbbk)$,
by the functoriality of our constructions.
Since $\T_{\langle\sigma\rangle}(\Bbbk)$ acts freely on $X_\sigma$, we see that 
\[
  X_{\sigma,\tau}\ =\ \overline{\alpha^w}\cdot X_{\sigma,t}
\]
for any $w\in\sigma$ (not just those in the relative interior of $\sigma$, which is implied
by~\eqref{Eq:translation}). 

Thus the two tropical reductions for every face containing $w\in\trop(X)$
are related by the same translation.
In particular, the tropical reductions for incident faces are related by the same
translation, but this is not the case for non-incident faces.
Thus we only need to know the translations at minimal faces of $\scrC$.
Let $\scrM(\scrC)$ be the set of minimal faces of $\scrC$.
For each $\sigma\in\scrM(\scrC)$, we may choose some point $w\in\sigma$ and set
$a_\sigma:=\overline{\alpha^w}$. 
We summarize some conclusions of this discussion.

\begin{proposition}\label{P:trivialization}
 With these definitions, if $w\in\trop(X)$, then 
\[
   X_{w,\tau}\ =\ a_\sigma\cdot X_{w,t}\,,
\]
 where $\sigma$ is a minimal face of the face containing $w$ in its relative interior.

 If $\rho\in\scrC$, then the elements 
 $\{a_\sigma\mid \sigma\subset\rho\mbox{ and }\sigma\in\scrM(\scrC)\}$ lie in a single coset of
 $\T_{\langle\rho\rangle}$. 
\end{proposition}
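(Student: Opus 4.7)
The plan is to deduce both assertions from the preparatory discussion preceding the proposition together with Proposition~\ref{Prop:ini}; essentially all the real work has already been done, and what remains is a modest amount of bookkeeping.

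For the first assertion, I would let $\rho\in\scrC$ be the unique face with $w\in\relint(\rho)$. Proposition~\ref{Prop:ini} identifies $X_{w,\tau}=X_{\rho,\tau}$ and $X_{w,t}=X_{\rho,t}$, so it suffices to establish $X_{\rho,\tau}=a_\sigma\cdot X_{\rho,t}$ for any minimal face $\sigma\subset\rho$. Fix such a $\sigma$ and the point $w''\in\sigma$ that was chosen to define $a_\sigma=\overline{\alpha^{w''}}$. Since $w''\in\sigma\subset\rho$, the identity developed just above the proposition---namely, that for every face of $\scrC$ containing a given point $w''$, the two trivializations of the corresponding tropical reduction differ by translation by $\overline{\alpha^{w''}}$---applied to $\rho$ yields
\[
  X_{\rho,\tau}\ =\ \overline{\alpha^{w''}}\cdot X_{\rho,t}\ =\ a_\sigma\cdot X_{\rho,t},
\]
finishing the first part.

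For the second assertion, let $\sigma,\sigma'\in\scrM(\scrC)$ both lie in $\rho$, with defining points $w\in\sigma$ and $w'\in\sigma'$. Because $w,w'\in\rho$ we have $\rho\subset w+\Gamma_{\langle\rho\rangle}$ and hence $w-w'\in\Gamma_{\langle\rho\rangle}$. Since $\alpha_N\colon\Gamma_N\to\T_N(R)$ is a group homomorphism,
\[
  a_\sigma\cdot a_{\sigma'}^{-1}\ =\ \overline{\alpha^w}\cdot\overline{\alpha^{w'}}^{-1}\ =\ \overline{\alpha^{w-w'}},
\]
and the functoriality of the construction with respect to the inclusion of lattices $\langle\rho\rangle\hookrightarrow N$ places this element in $\T_{\langle\rho\rangle}(\Bbbk)$. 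Thus all of the $a_\sigma$ attached to minimal faces of $\rho$ lie in a single coset of $\T_{\langle\rho\rangle}(\Bbbk)$ inside $\T_N(\Bbbk)$, as claimed.

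The only point to watch is that the translation formula $X_{\rho,\tau}=\overline{\alpha^{w''}}\cdot X_{\rho,t}$ must be available for $w''$ ranging over all of $\rho$, not merely $\relint(\rho)$; this extension was already carried out in the paragraph before the statement, using the $\T_{\langle\rho\rangle}(\Bbbk)$-invariance of $X_\rho$ from the end of the local structure discussion. Once that observation is in hand the proof is a two-line manipulation in each part, and no further obstacle remains.
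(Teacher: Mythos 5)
Your proof is correct and takes essentially the same route as the paper, which states the proposition as a summary of the discussion immediately preceding it; you have simply assembled those pieces---the translation formula $X_{\rho,\tau}=\overline{\alpha^{w}}\cdot X_{\rho,t}$ valid for any $w\in\rho$, Proposition~\ref{Prop:ini}, and the homomorphism property and functoriality of $\alpha_N$---into an explicit two-part argument.
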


%
\subsection{Complex coamoebae}

As a real algebraic group, the set $\C^*$ of invertible complex numbers is
isomorphic to  $\R\times\U$ under the map $(r,\theta)\mapsto e^{r+\sqrt{-1}\theta}$.
Here, $\U$ is set of complex numbers of norm 1 which we identify with $\R/2\pi\Z$.
The inverse map $z\mapsto (\log|z|,\arg(z))$ induces an isomorphism
$\T_N\to \R_N\times \U_N$, 
where $\R_N:=\R\otimes_\Z N$ and $\U_N:=\U\otimes_\Z N$.
Composing with the projections to the factors $\R_N$ and $\U_N$ gives the maps
$\Log$ and $\Arg$, respectively.

The \demph{amoeba  $\scrA(X)$} of a subscheme $X\subset\T_N$ is the image of its complex
points $X(\C)$ under the map $\Log$.
Let $\bS_N$ be the sphere in $\R_N$, with projection $\pi\colon\R_N\setminus\{0\}\to\bS_N$.
Then the \demph{logarithmic limit set $\scrL^\infty(X)$} of $X$ is the set of accumulation
points in $\bS_N$ of sequences $\{\pi(\Log(x_n))\}$ where 
$\{x_n\}\subset X$ is unbounded in that its sequence of
logarithms $\{\Log(x_n)\}$ is unbounded.
This was introduced by Bergman~\cite{Berg}, and then Bieri and Groves~\cite{BiGr} showed
that the cone over it was a rational polyhedral fan.
Later work of Kapranov~\cite{EKL}, and then  Speyer and Sturmfels~\cite{SS} identified this
fan with the negative $-\trop(X)$ of the tropical variety of $X$, computed in any valued
field $\K$ containing $\C$ with value group $\R$ and residue field $\C$.

\begin{example}\label{Ex:AlineP2}
 Let $\ell\subset\T^2$ be defined by $f:=x+y+1=0$.
 By the triangle inequality,
\[
   |x|+|y|\ \geq\ 1 \,,\quad
   |x|+1\ \geq\ |y|\,,\quad\mbox{and}\quad
   |y|+1\ \geq\ |x|\,.
\]
 Here is the solution of these inequalities, the  amoeba
 $\scrA(\ell)$, and the tropical variety $\trop(\ell)$.
\[
   \raisebox{-30pt}{%
   \begin{picture}(94,80)(-14,0)
     \put(0,0){\includegraphics[height=80pt]{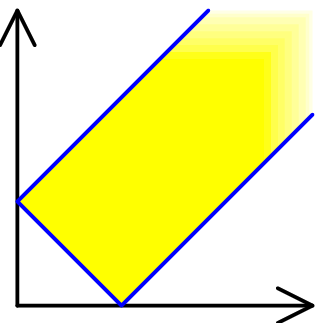}}
     \put(-14,70){$|y|$}  \put(66,14){$|x|$}
   \end{picture}
    \qquad\quad
   \includegraphics[height=80pt]{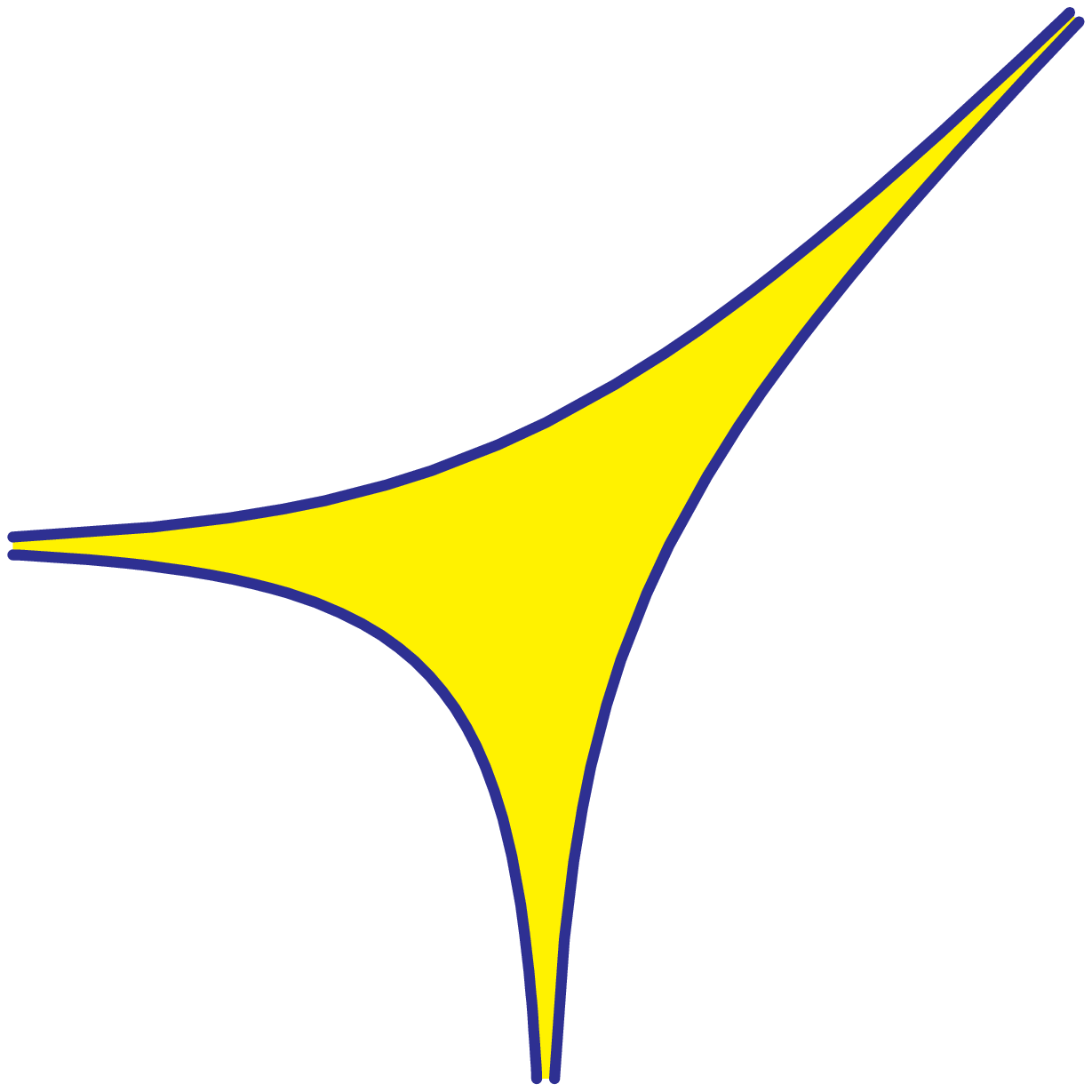}
    \qquad\quad
   \begin{picture}(80,80)
     \put(0,0){\includegraphics[height=80pt]{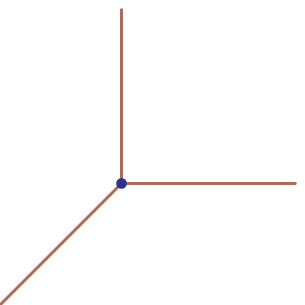}}
     \put(2,34){$(0,0)$}  
   \end{picture}}
  \eqno{\QED}
\]
\end{example}

The \demph{coamoeba $\coscrA(X)$} of a complex subscheme $X$ of $\T_N$ is the image of
$X(\C)$ under the argument map $\Arg$. 
Coamoebae are naturally semi-algebraic sets.
Its \demph{phase limit set $\pls(X)$} is the set of accumulation
points of arguments of unbounded sequences in $X$.
For $w\in\Z^n$, the initial scheme $\ini_w X\subset(\C^*)^n$ is defined by the initial
ideal of $I$. 
Then $\ini_w X\neq \emptyset$ exactly when $w$ lies in $\trop(X)$.
For the line $\ell$ of Example~\ref{Ex:AlineP2}, $\ini_{(0,0)}f=f$, and for $s>0$,
\[
   \ini_{(s,0)}f\ =\ y+1\,,\qquad
   \ini_{(0,s)}f\ =\ x+1\,,\qquad\mbox{and}\qquad
   \ini_{(-s,-s)}f\ =\ x+y\,,
\]
and if $w$ does not lie in one of these rays (which constitute $\trop(\ell)$), then $\ini_wf$
is a monomial, and so $\ini_w\ell=\emptyset$. 

The following structure theorem for the phase limit set was established in~\cite{NS}.
The case when $X$ is a complete intersection earlier shown by
Johansson~\cite{Johansson}. 

\begin{proposition}\label{T:one}
  The closure of $\coscrA(X)$ is $\coscrA(X)\cup \pls(X)$, and 
\[
   \pls(X)\ =\  \bigcup_{w\neq 0} \coscrA(\ini_w X)\,.
\]
\end{proposition}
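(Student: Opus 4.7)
The first equality is a compactness argument. The inclusion $\coscrA(X)\cup\pls(X)\subseteq\overline{\coscrA(X)}$ is immediate, since every point of either is manifestly a limit of arguments of points in $X$. Conversely, if $\theta=\lim_n\Arg(x_n)$ for a sequence $x_n\in X$, split cases on whether $\{\Log(x_n)\}$ is bounded in $\R_N$. If bounded, pass to a subsequence converging in $(\C^*)^n$ to some $x$; since $X$ is analytically closed in the torus, $x\in X$ and $\theta=\Arg(x)\in\coscrA(X)$. If unbounded, $\theta\in\pls(X)$ by definition.

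For the inclusion $\bigcup_{w\neq 0}\coscrA(\ini_w X)\subseteq\pls(X)$, I would proceed by an explicit rescaling on the positive real ray. Fix $w\neq 0$ and $y\in\ini_w X$. The Gr\"obner degeneration realizes $\ini_w X$ as the flat limit at $t=0$ of the family whose fiber at $t\in\C^*$ is the rescaled variety $t^w\cdot X$, where $t^w\cdot x:=(t^{w_1}x_1,\ldots,t^{w_n}x_n)$. Flatness supplies an analytic arc $t\mapsto x(t)\in X$ on $(0,\varepsilon)$ with $t^w\cdot x(t)\to y$. For real $t>0$ the factor $t^w$ has trivial argument, so $\Arg(x(t))=\Arg(t^w\cdot x(t))\to\Arg(y)=\theta$, while $\Log(x(t))=\Log(t^w\cdot x(t))-(\log t)\,w$ is unbounded since $w\neq 0$. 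Hence $\theta\in\pls(X)$.

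For the reverse inclusion, given an unbounded sequence $x_n\in X$ with $\Arg(x_n)\to\theta$, extract a subsequence so that $u_n:=\Log(x_n)/\|\Log(x_n)\|\to u$ for some unit vector $u\in\R_N$. By Bergman, Bieri--Groves, and Kapranov, $u$ lies on the spherical slice of the (negated) tropical variety, hence on a ray of a Gr\"obner--tropical fan of $X$; pick $w$ a positive multiple of $u$ in the relative interior of the smallest cone $\sigma$ containing that ray, so that $\ini_w X=\ini_\sigma X$ is nonempty. Set $t_n:=\exp(-\|\Log(x_n)\|/\|w\|)$, so that $t_n\to 0^+$ and $t_n^w\cdot x_n$ asymptotically kills the $w$-component of $\Log(x_n)$. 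The plan is to produce $y=\lim t_n^w\cdot x_n$ (after a further subsequence) as a point of the flat-family fiber $\ini_w X$ with $\Arg(y)=\theta$, using that multiplication by $t_n^w$ preserves arguments. The main technical obstacle is that $\Log(t_n^w\cdot x_n)$ may remain unbounded in directions orthogonal to $w$; one must exploit that $\ini_\sigma X$ is invariant under the stabilizer subtorus $\T_{\langle\sigma\rangle}$ (equivalently, work in a toric compactification of $(\C^*)^n$ whose fan contains $\sigma$ as a cone) to extract a genuine limit point in $\ini_\sigma X$ and then, using that $\ini_\sigma X$ is a union of $\T_{\langle\sigma\rangle}$-orbits, shift within its stabilizer orbit to pin down a representative $y$ with $\Arg(y)=\theta$.
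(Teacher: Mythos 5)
The paper does not reprove this proposition (it cites \cite{NS}), so I assess your argument on its own merits.

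Your first equality and the inclusion $\bigcup_{w\neq 0}\coscrA(\ini_wX)\subseteq\pls(X)$ are essentially sound, modulo a sign slip: with the min convention used here, the Gr\"obner degeneration that flat-limits to $\ini_wX$ at $t=0$ is the family $t\mapsto t^{-w}\cdot X$, not $t^w\cdot X$. With that corrected, the argument works: openness of the flat morphism $\mathcal{X}\to\mathbb{A}^1$ (in the classical topology) gives points of $\mathcal X$ with $t>0$ arbitrarily near $(y,0)$, so one gets real positive rescalings, which preserve arguments, and $\Log$ blows up since $w\neq 0$.

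The reverse inclusion $\pls(X)\subseteq\bigcup_{w\neq 0}\coscrA(\ini_wX)$ has a genuine gap, which you notice but do not close, and the specific route you gesture at does not in fact close it. Two separate problems. First, after passing to $t_n^w\cdot x_n$, the residual drift $\|\Log(x_n)\|\,(u_n-u)$ need not lie in $\Gamma_{\langle\sigma\rangle}$, so invariance of $\ini_\sigma X$ under $\T_{\langle\sigma\rangle}$ gives you no control over it; you cannot shift an unbounded transverse drift into a compact torus orbit. Second, and more fundamentally, the cone you pick --- the smallest cone of the fan containing the radial limit direction $u$ --- can be strictly smaller than the correct one. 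The direction $u$ only records the leading-order escape rate; if $\Log(x_n)=r_ne_1+\sqrt{r_n}\,e_2+O(1)$ with $r_n\to\infty$, then $u=e_1$, yet after one rescaling the sequence still escapes in the $e_2$ direction, and the relevant cone is two-dimensional. Said differently, in a (tropical) toric compactification $Y_\Sigma$ of the torus, the sequence $x_n$ converges to a boundary point in some orbit $O_\tau$, and $\tau$ can be a larger cone than the one your $u$ determines. The working argument compactifies \emph{first}: pass to a subsequence with $x_n\to\bar x\in\overline X\cap O_\tau$ for some nonzero cone $\tau$; use that $\overline X\cap O_\tau$ is the image of $\ini_\tau X$ under the quotient $(\C^*)^n\twoheadrightarrow O_\tau\cong(\C^*)^n/\T_{\langle\tau\rangle}$; observe that $\Arg(x_n)\to\theta$ forces $\theta$ to project to the (well-defined) argument of $\bar x$ in $\U_N/\U_{\langle\tau\rangle}$; and finally use the free $\T_{\langle\tau\rangle}$-action on $\ini_\tau X$ to lift $\bar x$ to a $y\in\ini_\tau X$ with $\Arg(y)=\theta$ exactly. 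Your preliminary rescaling by $t_n^w$ is a detour that introduces the unboundedness you then have to fight; dropping it and working directly with the compactified limit is what makes the proof go through. Alternatively one can make the rescaling idea work by iterating it --- each failed rescaling forces a strictly larger cone of the Gr\"obner fan, and the fan is finite --- but then the termination argument must be stated, and the final $w$ will typically not be proportional to $u$.
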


This union is finite, for $X$ has only finitely many initial schemes.

\begin{example}\label{Ex:ClineP2}
 We determine the coamoeba of the line $\ell$ from Example~\ref{Ex:AlineP2}. 
 If $x$ is real then so is $y$.
 As $\ell$ meets all but the positive quadrant, its real points give 
 $(\pi,0)$, $(\pi,\pi)$, and $(0,\pi)$ in $\coscrA(\ell)$ (the dots
 in~\eqref{Eq:two_triangles}). 
 For $x\not\in\R$, as $y=-(x+1)$, the diagram
 \[
   \begin{picture}(180,53)(-40,-1)
    \put(0,0){\includegraphics{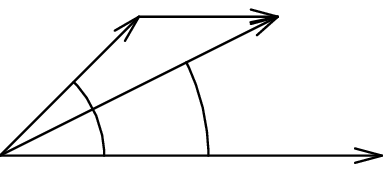}}
    \put(18,43){$x$}
    \put(82,43){$x+1$}
    \put(115,2){$\R$}    \put(-9,-2){$0$}
    \put(84,20){$\arg(x+1)$} \put(82,22.5){\vector(-4,-1){20}}
    \put(-40,16){$\arg(x)$}\put(-4,18.5){\vector(4,-1){30}}
   \end{picture}
 \]
 shows that $\arg(y)=\pi+\arg(x+1)$ can take on any value
 strictly between $\pi+\arg(x)$ (for $x$ near $\infty$) and $0$  (for $x$ near $0$).
 Thus $\coscrA(\ell)$ also contains the interiors of the two triangles
 of~\eqref{Eq:two_triangles} displayed 
 in the fundamental domain $[-\pi,\pi]^2$ in $\R^2$ of $\U^2$.
 (This is modulo $2\pi$, so that $\pi=-\pi$.) 
 Because of this identification, the triangles meet at their vertices, and
 each shares half of one of the lines $\arg(x)=\pi$, $\arg(y)=\pi$, and 
 $\arg(y)=\pi+\arg(x)$.
  \begin{equation}\label{Eq:two_triangles}
   \raisebox{-50pt}{%
    \begin{picture}(185,88)(-25,-12)
     \put(-2,-2){\includegraphics{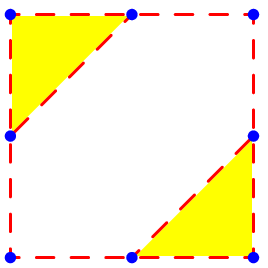}}
     \put(-25,-2){$-\pi$} \put(-15,33){$0$} \put(-15,69){$\pi$}
     \put(-10,-12){$-\pi$} \put(33,-12){$0$} \put(68,-12){$\pi$}
     \put(90,2){\vector(1,0){70}} \put(106,8){$\arg(x)$}
     \put(83,15){\vector(0,1){46}} \put(87,33){$\arg(y)$}
   \end{picture}}
  \end{equation}
 The phase limit set $\pls(\ell)$ consists of the three lines bounding the
 triangles, which are the coamoebae of the three non-trivial initial schemes of $\ell$.
\qed
\end{example}

If $X=\T_{N'}\subset\T_N$ is a subtorus, then $\coscrA(X)\subset\U_N$ is the corresponding real 
subtorus, $\U_{N'}$.
More generally, if $\T_{N'}$ acts on a scheme $X$ with finitely many orbits (so that $X$ is
supported on a union of translates of $\T_{N'}$) then $\coscrA(X)$ is a finite union of
translates of $\U_{N'}$, and its dimension is equal to the rank of $N'$, which is the complex
dimension of $X$.

%
\section{Non-archimedean coamoebae and phase tropical varieties}\label{S:main}

Suppose now that $\K$ is a valued field with surjective valuation map
$\nu\colon\K^*\twoheadrightarrow\Gamma$ whose residue field is the complex
numbers.
Choose a section $\tau\colon\Gamma\to\K^*$ of the valuation.
With this choice, we define an argument map $\arg_\tau\colon\K^*\to\U$ 
as follows.
For each element $x\in\K^*$, there is a unique element $a_x\in R^*$ such that
$x=a_x\cdot \tau(\nu(x))$, and we set $\defcolor{\arg_\tau}(x):= \arg(\overline{a_x})$, the
argument of the reduction of $a_x$.
This induces an argument map $\Arg_\tau\colon\T_N(\K)\to\U_N$, which 
is also the composition
\[
   \T_N(\K)\ \to\ \coprod_{w\in\Gamma_N} \T^w(\C)\ 
    \xrightarrow{\ \varphi^{-1}_\tau\ }\ \Gamma_N\times\T_N(\C)\ 
    \twoheadrightarrow\ \T_N(\C)\ \xrightarrow{\ \arg\ }\ \U_N\,.
\]
We assume that $\tau$ is fixed and drop the dependence on $\tau$ from our notation.

%
\subsection{Non-archimedean coamoebae}

The \demph{non-archimedean coamoeba $\nca(X)$} of a subscheme $X$ of $\T_N$ is the image
of $X(\K)$ under the map $\Arg$.
It is therefore the union of the complex coamoebae of all tropical reductions of $X$.
This is in fact a finite union.
A projective space closure $\P^N$ of $\T_N$ gives a polyhedral complex $\scrC$ in $\Gamma_N$
with support $\trop(X)$ and by Proposition~\ref{Prop:ini}, we have
\[
   \nca(X)\ =\ \bigcup_{\sigma\in\scrC} \coscrA(X_\sigma)\,.
\]

\begin{example}\label{Ex:tropical_plane_line}
  Let $1\in\Gamma$ be a  positive element, and write $t$ for
  $\tau^1\in\K^*$. 
  Suppose that $\ell$ is the line defined in $(\K^*)^2$ by $f:=x+y+t=0$.
  Its tropical variety is shown on the left in Figure~\ref{F:ellInPlane}.
 (We assume that $\Gamma\subset\R$.)
  We have $\ini_{(1,1)}f=x+y+1$, and for $s>0$,
\[
   \ini_{(1+s,1)}f\ =\ y+1\,,\qquad
   \ini_{(1,1+s)}f\ =\ x+1\,,\qquad\mbox{and}\qquad
   \ini_{(1-s,1-s)}f\ =\ x+y\,,
\]
  so $\nca(\ell)$ is the union of the coamoebae of
  these tropical reductions, which is the union of the two closed triangles on the 
  right in Figure~\ref{F:ellInPlane}.
\begin{figure}[htb]
  \begin{picture}(135,96)(-30,-10)
   \put(0,0){\includegraphics{figures/tropical_plane_line.eps}}
   \put(2,35){$(1,1)$}
   \put(-30,-10){$(1{-}s,1{-}s)$}
   \put(38,75){$(1,1{+}s)$}
   \put(65,41){$(1{+}s,s)$}
  \end{picture}
  \qquad\qquad
   \begin{picture}(102,96)(-20,-17)
     \put(-2,-2){\includegraphics{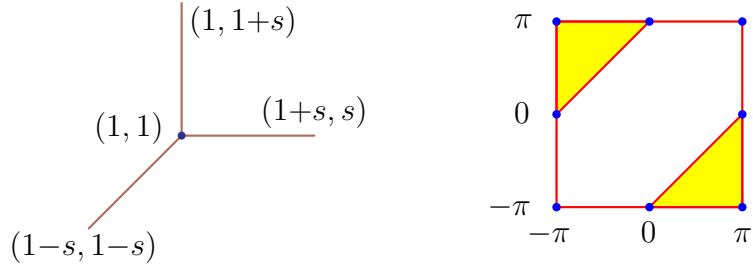}}
     \put(-25,-2){$-\pi$} \put(-15,33){$0$} \put(-15,69){$\pi$}
     \put(-10,-12){$-\pi$} \put(33,-12){$0$} \put(68,-12){$\pi$}
   \end{picture}
\caption{Tropical line and its non-archimedean coamoeba}\label{F:ellInPlane}
\end{figure}
A different section of the valuation map will simply translate this coamoeba, as the tropical
variety has a unique minimal face.
\qed
\end{example}

\begin{theorem}\label{T:complex_case}
 If $X\subset\T_N$ is defined over $\C$, then the closure of its complex coamoeba is equal
 to its non-archimedean coamoeba.
\end{theorem}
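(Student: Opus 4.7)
The plan is to rewrite both sides as the same finite union of coamoebae of initial schemes, indexed by the cones of the Bergman fan of $X$ over $\C$. This splits into two tasks: (i) showing that each tropical reduction $X_\sigma$ equals the classical initial scheme $\ini_\sigma X$ as a $\C$-subscheme of $\T_N$; (ii) matching the faces of the polyhedral complex $\scrC \subset \Gamma_N$ with the cones of the Bergman fan in $\R^n$, so that Proposition~\ref{T:one} finishes the argument.

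For (i), I would apply the explicit reduction formula from the remark following Proposition~\ref{Prop:ini}. Since $X$ is defined over $\C$, its defining ideal $I$ is generated by polynomials $f = \sum_\bm c_\bm \xi^\bm$ with $c_\bm \in \C \subset R^*$, so $\nu(c_\bm) = 0$ and $\overline{c_\bm} = c_\bm$. For any $w \in \Gamma_N$, the minimal $w$-weight $w(f)$ equals $\min_\bm \langle \bm, w\rangle$ and is attained exactly on the support $\calA(f)_w$ of the initial form. For each such $\bm$, $\langle \bm, w\rangle - w(f) = 0$, so $\overline{\tau(\langle \bm, w\rangle - w(f))} = 1$ regardless of the choice of section $\tau$, and the formula collapses to
\[
\overline{\varphi^*_{\tau,w}(\tau(-w(f)) \cdot f)} \ =\ \sum_{\bm \in \calA(f)_w} c_\bm\, \xi^\bm \ =\ \ini_w f.
\]
Hence the ideal of $X_{w,\tau}$ contains $\ini_w I$; equality follows from Proposition~\ref{Prop:surjective} combined with the fact that a Gr\"obner basis of $I$ over $\C$ (for an order refining $w$) remains a Gr\"obner basis of $I_\K$ over $\K$, so every reduction of an element of $I_\K \cap R[M]^w$ is generated by the reductions $\ini_w f$ of the complex generators.

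For (ii), the faces of $\scrC$ correspond bijectively to cones of the Bergman fan $\Sigma \subset \R^n$ of $X$, because the set $\calA(f)_w$ depends only on the cone of the Gr\"obner fan in which $w$ lies, and this combinatorial datum is the same whether $w$ is taken in $\R^n$ or in $\Gamma_N$. Consequently $\{X_\sigma \mid \sigma \in \scrC\} = \{\ini_w X \mid w \in \R^n\}$ as finite collections of $\C$-subschemes of $\T_N$. Combining this with (i),
\[
\nca(X) \ =\ \bigcup_{\sigma \in \scrC} \coscrA(X_\sigma) \ =\ \bigcup_{w \in \R^n} \coscrA(\ini_w X) \ =\ \coscrA(X) \cup \pls(X) \ =\ \overline{\coscrA(X)},
\]
where the last two equalities use $\ini_0 X = X$ and Proposition~\ref{T:one}.

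The main obstacle is the combinatorial bookkeeping when $\Gamma \not\subset \R$, since points of $\Gamma_N$ no longer admit a direct interpretation as real weight vectors, and one might fear that the choice of section $\tau$ affects the answer. What rescues the argument is that both constructions only use the support $\calA(f)_w$, which is intrinsic to the cone of the Gr\"obner fan containing $w$, and the $\tau$-factors in the reduction formula collapse automatically once all coefficients have valuation zero.
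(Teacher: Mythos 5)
Your proposal is correct and follows the same route as the paper's (very terse) proof: unwind the definition of $\nca(X)$ as the union of coamoebae of tropical reductions, identify those reductions with the initial schemes $\ini_w X$ because the coefficients lie in $\C\subset R^*$ and so have valuation zero (making the $\tau$-factors in the reduction formula trivial), and then invoke Proposition~\ref{T:one}. The paper compresses your steps (i) and (ii) into the single sentence ``Since $X$ is a complex scheme, these are its initial schemes,'' which one could also justify by specializing Proposition~\ref{Prop:local} to $w=0$ where $X_0=X$; your verification via the explicit reduction formula is a valid and more self-contained alternative.
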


Note that in particular, $\nca(X)$ is closed.

\begin{proof}
 By definition, $\nca(X)$ is the union of the coamoebae of all tropical reductions of $X$.
 Since $X$ is a complex scheme, these are its initial schemes, so the result
 holds by Proposition~\ref{T:one}.
\end{proof}

We will see that Theorem~\ref{T:complex_case} describes the local structure of non-archimedean
coamoebae. 

\begin{example}\label{Ex:tropical_space_line}
 Let $\zeta$ be a primitive third root of unity, $\omega:=1+\sqrt{-1}$,
 and $t$ be an element of $\K^*$ with valuation 1 as in
 Example~\ref{Ex:tropical_plane_line}. 
 Consider the line 
\[
   x+\zeta y + \zeta^2 t\ =\ 
   \sqrt{-1}\cdot x +z - \omega\ =\ 0
\]
 in $(\K^*)^3$.
 We display its tropical variety, again assuming that $\Gamma\subset\R$.
\[
  \begin{picture}(255,95)(-66,-1)
   \put(0,0){\includegraphics{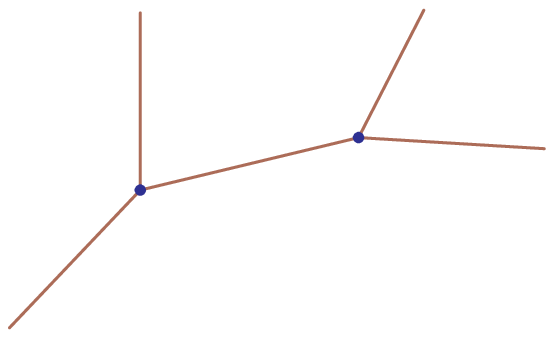}}
   \put(-6,40){$(0,0,0)$}    \put(82,40){$(1,1,0)$} 
   \put(-1,85){$(0,0,s)$}     \put(122,85){$(1,1{+}s,0)$}
   \put(-66,0){$(-s,-s,-s)$} \put(140,40){$(1{+}s,1,0)$}
  \end{picture}
\]
This has two vertices $(0,0,0)$ and $(1,1,0)$ connected by an internal line segment
($(s,s,0)$ for $0<s<1$) and four rays as indicated for $s>0$.
For each face of this polyhedral complex, the ideal $I$ of the line has a different initial
ideal as follows 
\[
  \begin{array}{rclrcl}
   \ini_{(0,0,0)}I&=&\langle x+\zeta y,\, \sqrt{-1}\cdot x +z -\omega\rangle
    \rule{0pt}{14pt}&
   \ini_{(1,1,0)}I& =& \langle x+\zeta y+\zeta^2,\, z - \omega\rangle\rule{0pt}{14pt} 
   \\
    \ini_{(s,s,0)}I& =&\langle x+\zeta y,\,z - \omega)\rangle&
   \ini_{(1+s,1,0)}I&=&\langle \zeta y+\zeta^2,\, z - \omega\rangle
     \\\rule{0pt}{14pt}
   \ini_{(0,0,s)}I&=&\langle x+\zeta y,\, \sqrt{-1}\cdot x - \omega\rangle
    &
   \ini_{(1,1+s,0)}I&=&\langle x+\zeta^2,\, z - \omega\rangle
    \\\multicolumn{6}{c}{\rule{0pt}{14pt}
   \ini_{(-s,-s,-s)}I\ =\ \langle x+\zeta y,\, \sqrt{-1}\cdot x + z\rangle}
   \end{array}
\]

Figure~\ref{F:NA} shows two views of $\nca(\ell)$, which is the union of 
the seven coamoebae, one for each tropical reduction of $\ell$.
\begin{figure}[htb]
  \includegraphics[height=150pt]{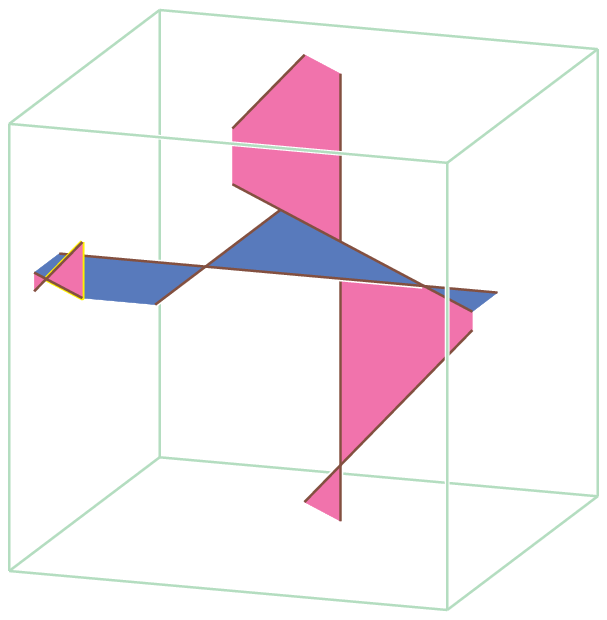}   \qquad
  \includegraphics[height=165pt]{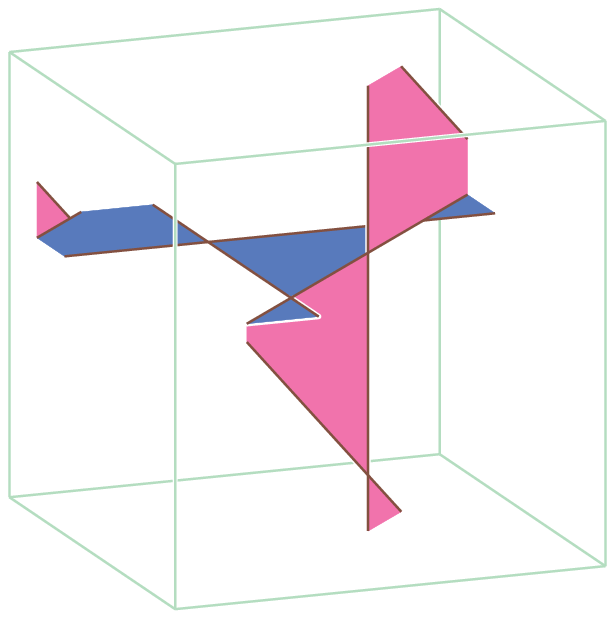} 
\caption{Non-archimedean coamoeba of a line}
\label{F:NA}
\end{figure}
It is the closure of the two coamoebae corresponding to the vertices of $\trop(\ell)$, each of
which is a coamoeba of a line in a plane consisting of two triangles.
Here $(0,0,0)$ corresponds to the vertical triangles and $(1,1,0)$ to the 
horizontal triangles.
These two coamoebae are attached along the coamoeba of the tropical reduction corresponding
to the edge between the vertices, and each has two boundary coamoebae corresponding to the
unbounded rays at each vertex.

If we change the section of the valuation which we implicitly used to compute
this non-archimedean coamoeba, its effect would be to rotate (slide) the horizontal and vertical
triangles along the line they share, followed by a global translation.
\qed
\end{example}

\begin{theorem}\label{T:nca}
 Let $X$ be an irreducible $\K$-subscheme of\/ $\T_N$, where $\K$ is a field with a
 non-archimedean valuation and residue field $\C$.
 Fix a section of the valuation to define the non-archimedean coamoeba
 of $X$.
 Let $\scrC$ be a polyhedral complex with support $\trop(X)$ induced by a projective
 compactification of\/ $\T_N$.
 Then
 \begin{equation}\label{E:NCA}
   \nca(X)\ =\ \bigcup_{\substack{\sigma\in\scrC\\\mbox{is minimal}}}
      \overline{\coscrA(X_\sigma)}
      \ =\ \bigcup_{\substack{\sigma\in\scrC\\\mbox{is minimal}}}
       \nca(X_\sigma)\,,
 \end{equation}
 the union of closures of coamoebae of the tropical reductions 
 for all minimal faces of $\scrC$.

 Changing the section of valuation will translate each piece $\nca(X_\sigma)$ by a possibly
 different element of\/ $\U_N$.
 If $\rho\in\scrC$ is not minimal, then the translations for the minimal faces of $\rho$ all
 lie in the same coset of\/ $\U_{\langle\sigma\rangle}$.
\end{theorem}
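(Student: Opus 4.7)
The plan is to reduce the first equality to the complex case at the minimal faces of $\scrC$, where Theorem~\ref{T:complex_case} and Proposition~\ref{T:one} apply, and then to use Proposition~\ref{Prop:local} to identify the resulting complex initial schemes of $X_{\sigma_0}$ with tropical reductions $X_\rho$ of $X$ at faces $\rho\supset\sigma_0$. By definition, $\nca(X)=\bigcup_w\coscrA(X_w)$, and by Proposition~\ref{Prop:ini} the reduction is constant on the relative interior of each face of $\scrC$, so
\[
  \nca(X)\ =\ \bigcup_{\sigma\in\scrC}\coscrA(X_\sigma)\,.
\]
The task is thus to show that the pieces indexed by non-minimal faces are absorbed into the closures of the pieces indexed by minimal faces.

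I would fix a minimal face $\sigma_0\in\scrC$ and a point $w\in\relint(\sigma_0)$. Because $\sigma_0$ is minimal, the faces of $\scrC$ containing $w$ are exactly those containing $\sigma_0$. The tropical reduction $X_{\sigma_0}=X_w$ is a complex subscheme of $\T_N(\C)$ via the trivialization, so Theorem~\ref{T:complex_case} immediately gives $\overline{\coscrA(X_{\sigma_0})}=\nca(X_{\sigma_0})$, yielding the second equality of~\eqref{E:NCA}. By Proposition~\ref{T:one} applied to $X_{\sigma_0}$, this closure is the union of $\coscrA(\ini_v X_{\sigma_0})$ over cones of the Gr\"obner-tropical fan of $X_{\sigma_0}$; by Proposition~\ref{Prop:local}, that fan equals the local fan $\scrC_w$ and $\ini_{\rho_w}(X_{\sigma_0})=X_\rho$ for each face $\rho\supset\sigma_0$. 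Hence
\[
  \overline{\coscrA(X_{\sigma_0})}\ =\ \bigcup_{\rho\supset\sigma_0}\coscrA(X_\rho)\,.
\]
Since every face of $\scrC$ contains at least one minimal face, taking the union over all minimal $\sigma_0$ recovers $\bigcup_{\rho\in\scrC}\coscrA(X_\rho)=\nca(X)$, which gives the first equality of~\eqref{E:NCA}.

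For the statement about changing the section from $\tau$ to $t$, I would apply Proposition~\ref{P:trivialization} piece by piece. For each minimal face $\sigma$ and any $w\in\sigma$, the two reductions satisfy $X_{w,\tau}=a_\sigma\cdot X_{w,t}$ with $a_\sigma\in\T_N(\C)$; since the argument map $\T_N(\C)\to\U_N$ is a group homomorphism, the corresponding piece $\nca(X_{\sigma,t})$ of the non-archimedean coamoeba is translated by $\arg(a_\sigma)\in\U_N$. For the coset claim, if $\rho\in\scrC$ is a non-minimal face and $\sigma,\sigma'\subset\rho$ are minimal, Proposition~\ref{P:trivialization} states that $a_\sigma$ and $a_{\sigma'}$ lie in a single coset of $\T_{\langle\rho\rangle}(\C)$; applying $\arg$ places $\arg(a_\sigma)$ and $\arg(a_{\sigma'})$ in a single coset of $\U_{\langle\rho\rangle}$, as required.

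The main obstacle is the bookkeeping between tropical reductions of $X$, initial schemes of the complex varieties $X_{\sigma_0}$, and the section-dependent trivializations $\varphi_{\tau,w}\colon\T_N\xrightarrow{\sim}\T^w$: one must confirm that this identification intertwines the reduction-at-$\rho$ operation on $X$ (for faces $\rho\supset\sigma_0$) with the initial-scheme operation on $X_{\sigma_0}\subset\T_N(\C)$. This is precisely the content of Proposition~\ref{Prop:local} combined with the trivialization discussion of Section~\ref{S:change}, so once that compatibility is invoked the remainder of the argument is a formal unwinding of definitions.
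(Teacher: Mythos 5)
Your proof is correct and follows essentially the same route as the paper: both reduce to the complex picture at a minimal face $\sigma_0$ via Theorem~\ref{T:complex_case} and Proposition~\ref{T:one}, then invoke Proposition~\ref{Prop:local} to identify $\ini_{\rho_w}(X_{\sigma_0})$ with $X_\rho$ for $\rho\supset\sigma_0$, and handle the change of section by Proposition~\ref{P:trivialization}. The only difference is organizational (you fix a minimal face and enumerate the faces containing it; the paper fixes an arbitrary face and chooses a minimal face inside it), and your version spells out a few details the paper leaves implicit, including the observation that the theorem statement's coset should be of $\U_{\langle\rho\rangle}$ rather than $\U_{\langle\sigma\rangle}$.
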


\begin{proof}
 Let $\rho\in\scrC$ be a face.
 Then $\rho$ contains a minimal face $\sigma$ of $\scrC$.
 Let $w\in\sigma$.
 By Proposition~\ref{Prop:local}, $X_\rho$ is an initial scheme of $X_w=X_\sigma$.
 By by Proposition~\ref{T:one}, its coamoeba is a component of the phase limit set of
 $X_\sigma$ and lies in the closure of $\coscrA(X_w)$.
 Similarly, Theorem~\ref{T:complex_case} implies that $\coscrA(X_\rho)\subset\nca(X_\sigma)$.
 These facts imply~\eqref{E:NCA}.
 The statement about the change of section follows from
 Proposition~\ref{P:trivialization}. 
\end{proof}

%
\subsection{Phase tropical varieties}

The product of the maps $\trop$ and $\Arg$ is a map 
\[
  \defcolor{\Ptrop}\ \colon\ \T_N(\K)\ \longrightarrow\ \Gamma_N\times\U_N\,.
\]
The \demph{phase tropical variety $\Ptrop(X)$} of a $\K$-subscheme $X$ of $\T_N$ is the 
closure of the image of $X(\K)$ under this map.
Here, we equip $\Gamma_N$ with the weakest topology so that the
half-spaces~\eqref{Eq:half-space} are closed.
Under the  projection to the first coordinate, $\Ptrop(X)$ maps to the tropical variety of
$X$ with fiber over a point $w$ the closure of the coamoeba of the tropical reduction
$\ini_wX$ of $X$. 
This notion was introduced by Mikhalkin for plane curves~\cite[\S6]{Mik05}.

\begin{example}
 Suppose that $\Gamma=\R$ and consider $\Ptrop(\ell)$, where $\ell$ is the line $x+y+1=0$.
 As we cannot easily represent objects in four dimensions, we draw $\trop(\ell)$ and indicate
 the fibers above points of $\trop(\ell)$ via screens.
\[
   \includegraphics{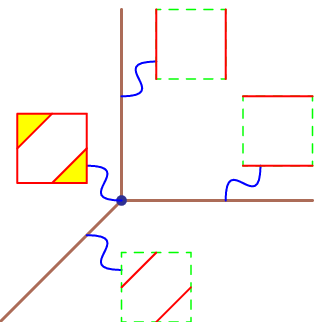}
\]
 This phase tropical variety is homeomorphic to a three-punctured sphere (in fact to $\ell$
 itself as a subset of $(\C^*)^2$).
 This indicates why we take the closure, for without the closure, the phase tropical variety
 would omit the six line segments that are in the closure of the coamoeba of
 $\ini_{(0,0)}\ell$ above $(0,0)$, but not in the coamoeba.\qed
\end{example}

Choose a projective compactification $\P^N$ of $\T_N$ so that for any $\K$-subscheme $X$ of
$\T_N$, we have a polyhedral complex $\scrC$ whose support is $\trop(X)$, as in
Remark~\ref{R:scrC}. 
A subset of the form $\rho\times Y$ of $\Gamma_N\times \U_N$, where $\rho$ is a polyhedron (or
the relative interior of a polyhedron) and $Y$ is a semi-algebraic subset of $\U_N$, has 
dimension  $\dim(\rho)+\dim(Y)$, where $\dim(\rho)$ is the dimension of the polyhedron
as defined in Subsection~\ref{S:polyhedron} and $\dim(Y)$ is its dimension as a semialgebraic
subset of $\U_N$.
Any finite union of such sets has dimension the maximum dimension of the sets in the
union.

\begin{theorem}\label{T:phaseTropical}
 Let $X$ be a $\K$-subscheme of $\T_N$.
 Then the phase tropical variety is a subset of $\Gamma_N\times \U_N$ of dimension $2\dim(X)$.
 It is the (finite) union 
 \begin{equation}\label{Eq:PTV}
   \Ptrop(X)\ =\  \bigcup_{\sigma\in\scrC} \sigma\times\overline{\coscrA(X_\sigma)}
   \ =\ 
   \coprod_{\sigma\in\scrC} \relint{\sigma}\times\overline{\coscrA(X_\sigma)}\,.
 \end{equation}
 If we change the section of valuation, the coamoeba factors in this
 decomposition will be translated by elements $a_\sigma\in\U_N$.
 For any face $\sigma$ of $\scrC$ the elements $a_\rho$ corresponding to subfaces $\rho$ of
 $\sigma$ all lie in a single coset of\/ $\U_{\langle\sigma\rangle}$.
\end{theorem}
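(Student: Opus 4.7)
The plan is to compute the image of $X(\K)$ under $\Ptrop$ explicitly, take its closure, and then extract both the dimension and the change-of-section dependence. The key input from Section~\ref{S:defs} is that on each fiber of $\trop$, the argument map factors through the coamoeba of the tropical reduction at that point, and by Proposition~\ref{Prop:ini} that coamoeba is constant on the relative interior of each face $\sigma$ of $\scrC$. Hence I would begin by showing that the image of $\Ptrop$ on $X(\K)$ equals
$$\coprod_{\sigma \in \scrC} \relint(\sigma) \times \coscrA(X_\sigma),$$
which is disjoint because the relative interiors of distinct faces are disjoint.

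To take the closure, I would note that in the declared topology on $\Gamma_N$ the closure of $\relint(\sigma)$ is $\sigma$; the closure of $\coscrA(X_\sigma)$ in the real torus $\U_N$ is $\overline{\coscrA(X_\sigma)}$; and since the index is finite, the closure of the union is the union of closures, giving the first form of~\eqref{Eq:PTV}. To obtain the disjoint form, I would decompose each $\sigma = \coprod_{\rho \subset \sigma} \relint(\rho)$ and swap the unions, reducing everything to the equality
$$\bigcup_{\sigma \supseteq \rho} \overline{\coscrA(X_\sigma)}\ =\ \overline{\coscrA(X_\rho)}$$
for each face $\rho$ of $\scrC$. The $\supseteq$ direction is trivial; for $\subseteq$, I would pick $w \in \relint(\rho)$ and use Proposition~\ref{Prop:local} to identify $X_\sigma$ with $\ini_{\sigma_w}(X_\rho)$, then invoke Proposition~\ref{T:one} (applied to the complex scheme $X_\rho$) to conclude $\coscrA(X_\sigma) \subseteq \overline{\coscrA(X_\rho)}$.

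For the dimension, I would estimate each piece separately. Since $\T_{\langle\sigma\rangle}(\C)$ acts on $X_\sigma$, the quotient has complex dimension $d - \dim\sigma$, and its coamoeba in $\U_{N/\langle\sigma\rangle}$ has real dimension at most $2(d - \dim\sigma)$; hence $\dim\coscrA(X_\sigma) \leq \dim\sigma + 2(d - \dim\sigma) = 2d - \dim\sigma$, so every piece $\sigma \times \overline{\coscrA(X_\sigma)}$ has dimension at most $2d$. For a maximal face $\sigma$ (of dimension $d$), $X_\sigma$ is a finite union of $\T_{\langle\sigma\rangle}$-orbits by the remark ending Subsection~1.4, so $\coscrA(X_\sigma)$ is a finite union of translates of $\U_{\langle\sigma\rangle}$ of dimension $d$, attaining the bound $2d$. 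For the section dependence, the discussion of Section~\ref{S:change} shows $\coscrA(X_\sigma)$ gets translated by $\arg(a_\sigma) \in \U_N$; since $a_\rho$ coincides with the translation at any minimal face contained in $\rho$, Proposition~\ref{P:trivialization} places the $a_\rho$ for subfaces of a given $\sigma$ in a single coset of $\T_{\langle\sigma\rangle}(\C)$, which becomes a coset of $\U_{\langle\sigma\rangle}$ after taking arguments.

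I expect the main obstacle to be the bookkeeping in the closure step — specifically, verifying $\coscrA(X_\sigma) \subseteq \overline{\coscrA(X_\rho)}$ when $\rho$ is a face of $\sigma$ via the local fan $\scrC_w$ at a point $w \in \relint(\rho)$, and correctly recombining the pieces into the disjoint form. The dimension estimate becomes routine once the torus action on $X_\sigma$ is exploited, and the change-of-section clause is essentially a restatement of Proposition~\ref{P:trivialization} after projecting from $\T_N(\C)$ to $\U_N$.
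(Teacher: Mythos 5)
Your proposal is correct and follows essentially the same route as the paper: the dimension bound via the free $\T_{\langle\sigma\rangle}$-action on $X_\sigma$, attainment of $2\dim X$ at top-dimensional faces where $X_\sigma$ is a finite union of orbits, and Proposition~\ref{P:trivialization} for the change of section. You spell out the closure step (via Propositions~\ref{Prop:local} and~\ref{T:one}) more explicitly than the paper, which declares the decomposition immediate from the definition, but that is exactly the machinery the paper has already deployed in Theorem~\ref{T:nca}, so it is the same argument rather than a different one.
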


\begin{proof}
 The decomposition~\eqref{Eq:PTV} is immediate from the definition of phase tropical varieties.
 We have the formula for the dimension of a coamoeba of a $\K$-subscheme $X$ of $\T_N(\C)$,
\[
   \dim(\coscrA(X))\ \leq\ \min(\rank(N), 2\dim(X)-n)\,,
\]
 when a subtorus $\T$ of $\T_N(\C)$ of dimension $n$ acts on $X$.
 This is because $\T\simeq\U^n\times\R_>^n$, the argument map has fibers containing orbits
 of $\R_>^n$, and the image must have dimension at most $\dim(\U_N)=\rank(N)$. 
 
 Suppose that $X$ is irreducible.
 For a face $\sigma$ of $\scrC$, $X_\sigma$ has an action of the subtorus 
 $\T_{\langle\sigma\rangle}$, which has dimension $\dim(\sigma)$.
 Thus $\dim(\sigma\times\overline{\coscrA(X_\sigma)})$ is
\[
  \dim(\sigma)\ +\ \dim(\coscrA(X_\sigma))\ \leq\ 
  \dim(\sigma)\ +\ \min(\rank(N), 2\dim(X_\sigma)-\dim(\sigma))\,,
\]
 which is at most $2\dim(X)$, 
 as $\dim(X)=\dim(X_\sigma)$.
 If $\sigma$ is a face of maximal dimension, so that $\dim(\sigma)=\dim(X)(\leq\rank(N))$, 
 then $\dim(X_\sigma)=\dim(\sigma)$, and so 
 $\dim(\sigma\times\overline{\coscrA(X_\sigma)})=2\dim(\sigma)=2\dim(X)$.

 The last statement follows from Proposition~\ref{P:trivialization}.
\end{proof}

%
\section{Coamoebae of tropically simple varieties}\label{S:simple}

Unlike tropical varieties, non-archimedean coamoebae and phase tropical varieties are 
typically not unions of polyhedra.
This is because any complex coamoeba may occur as a component of a non-archimedean 
coamoeba, and complex coamoebae are only partially combinatorial, as we may see from the
coamoeba of the hyperbola $(1+\sqrt{-1})-x-y+xy=0$,
\[
   \includegraphics{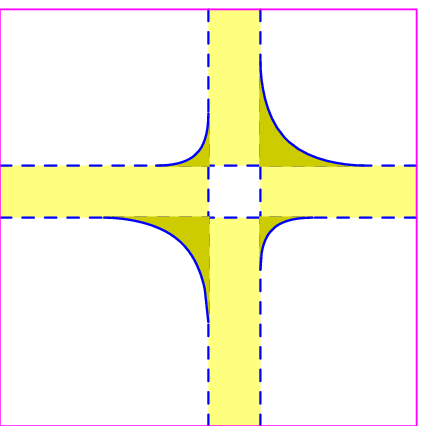}
\]
The coamoeba mostly lies inside the two rectangles enclosed by its phase limit set, which 
consists of the four lines where the arguments of $x$ and $y$ are $0$ and $\pi/4$.
It also has some extra pieces with curvilinear boundary that are shaded in this picture.

A $\K$-subscheme $X$ of $\T_N$ is \demph{tropically simple} if every tropical reduction 
$X_w$ is defined by polynomials, each of which has affinely independent 
support, and the spans of these supports are themselves independent.
We give a less terse and more geometric description in Subsections~\ref{SS:simple}
and~\ref{S:TSV}.
Non-archimedean coamoebae and phase tropical varieties of tropically simple varieties are
unions of polyhedra. 
We first study some complex varieties whose coamoebae are unions of polyhedra, for they form
the local theory of tropically simple varieties. 

\subsection{Coamoebae of hyperplanes}

A hypersurface $H\subset\T_N(\C)$ whose Newton polytope is a
unimodular simplex is a \demph{hyperplane}.
There is an identification of $T_N(\C)$  with $(\C^*)^{\rank(N)}$ 
and non-zero
complex numbers $a_1,\dotsc,a_n$ ($n\leq \rank(N)$) such that $H$ is defined by 
 \begin{equation}\label{Eq:hyperplane}
    1\ +\ a_1 z_1\ +\ a_2 z_2\ +\ \dotsb\ + a_n z_n\ =\ 0\,,
 \end{equation}
where $z_1,z_2,\dotsc$ are the coordinate functions on $(\C^*)^{\rank(N)}$.
Then $H$ is the product of the hyperplane in $\T^n(\C)$ with
equation~\eqref{Eq:hyperplane} and a coordinate subtorus of dimension $\rank(N)-n$. 
The coamoeba of $H$ is the product of the coamoeba of~\eqref{Eq:hyperplane} and a
coordinate subtorus $\U^{\rank(N)-n}$.

We may change coordinates in~\eqref{Eq:hyperplane}, replacing $a_i z_i$ by $x_i$ to obtain
the equation
 \begin{equation}\label{Eq:hyperplane_simple}
  \defcolor{f}\ :=\  1\ +\ x_1\ +\ x_2\ +\ \dotsb\ + x_n\ =\ 0\,.
 \end{equation}
This simply translates the coamoeba by the vector
\[
   (\arg(a_1)\,,\,\arg(a_2)\,,\,\dots\,,\,\arg(a_n))\,.
\]

A hyperplane $H$ in $(\C^*)^n$ with equation~\eqref{Eq:hyperplane_simple}
is \demph{standard}.
To describe its coamoeba, we work in a fundamental domain of homogeneous
coordinates for $\U^n$, namely 
\[
   \bigl\{(\theta_0,\theta_1,\dotsc,\theta_n)\ :\ \theta_i\leq
    2\pi+\min\{\theta_0,\dotsc,\theta_n\}\bigr\}\,,
\]
 where we identify 
$(\theta_0,\dotsc,\theta_n)$ with its translate $(\theta_0+t,\dotsc,\theta_n+t)$ for any
$t$. 

\begin{lemma}\label{L:hyperplane}
 Let $H$ be a standard hyperplane~$\eqref{Eq:hyperplane_simple}$. 
 Then $\overline{\coscrA(H)}$ is the 
 complement of 
 \begin{equation}\label{Eq:Zonotope}
   \defcolor{Z}\ :=\ 
    \{(\theta_0,\theta_1,\dotsc,\theta_n): |\theta_i-\theta_j|<\pi\ 
     \mbox{ for all }\ 0\leq i<j\leq n\}\,.
 \end{equation}
\end{lemma}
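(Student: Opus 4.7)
The plan is to recast membership in $\coscrA(H)$ as a convex-geometric condition on unit vectors in $\C$ and then identify the forbidden configurations with $Z$.

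First I would dehomogenize: a point $(\theta_0,\theta_1,\dotsc,\theta_n)$ of the homogeneous torus lies in $\coscrA(H)$ precisely when there exist positive reals $r_0,r_1,\dotsc,r_n>0$ with $\sum_{j=0}^n r_j e^{i\theta_j}=0$, since dividing by $r_0 e^{i\theta_0}$ recovers $1+x_1+\dotsb+x_n=0$ with $x_j=(r_j/r_0)e^{i(\theta_j-\theta_0)}$. Next I would translate the definition of $Z$ into intrinsic language on $\U^n$: choosing representatives so that $|\theta_i-\theta_j|<\pi$ for all pairs is equivalent to the spread $\max_j\theta_j-\min_j\theta_j$ being strictly less than $\pi$, which is equivalent to the points $e^{i\theta_j}\in S^1$ fitting in some open arc of length $<\pi$, equivalently lying in some open half-plane of $\C$.

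For the inclusion $\overline{\coscrA(H)}\subset \U^n\setminus Z$: if $(\theta_0,\dotsc,\theta_n)\in Z$ then by the reformulation above there is a unit vector $v\in\C$ with $\langle v,e^{i\theta_j}\rangle>0$ for every $j$, so $\langle v,\sum r_j e^{i\theta_j}\rangle>0$ for every choice of $r_j>0$. Hence $(\theta_0,\dotsc,\theta_n)\notin\coscrA(H)$; since $Z$ is open (being defined by strict inequalities), the same holds for the closure.

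For the reverse inclusion I would split into two cases. When the angles fail to fit in even a closed half-plane (the interior of $\U^n\setminus Z$, where the shortest arc has length strictly greater than $\pi$), the origin lies in the interior of $\mathrm{conv}\{e^{i\theta_0},\dotsc,e^{i\theta_n}\}$, and a standard trick produces a strictly positive representation $0=\sum r_j e^{i\theta_j}$: any non-negative representation coming from $0\in\mathrm{conv}$ can be perturbed by adding a small positive multiple of $\sum e^{i\theta_j}$ and compensating with a vector in a small ball about $0$ that still lies in the convex hull; this puts the point in $\coscrA(H)$ itself. The remaining boundary case, where the angles lie in a closed but not open half-plane (shortest arc of length exactly $\pi$), is handled by perturbing the two extremal angles outward so that the spread exceeds $\pi$ and invoking the interior case, giving the boundary point as a limit of points in $\coscrA(H)$.

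The main obstacle is the convex-geometry lemma in the interior case, namely the passage from $0\in\mathrm{int}\,\mathrm{conv}\{e^{i\theta_j}\}$ to a \emph{strictly} positive linear relation, which requires the balancing argument sketched above rather than a direct Carathéodory selection; the rest of the argument is bookkeeping with the fundamental domain and a straightforward perturbation at the boundary.
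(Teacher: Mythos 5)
Your proof is correct, but the harder inclusion $\U^n\setminus Z\subset\overline{\coscrA(H)}$ is argued by a genuinely different method than the paper's. The paper observes that if some $|\theta_i-\theta_j|\geq\pi$, then $(\theta_i,\theta_j)$ lies in the closed coamoeba of the planar line $1+x_i+x_j=0$, which is an initial form $\ini_w H$; Proposition~\ref{T:one} (the phase-limit-set structure theorem) then gives $\overline{\coscrA(\ini_w H)}\subset\overline{\coscrA(H)}$, so the whole complement of $Z$ is covered by cylinders over planar-line coamoebae. You instead recast $\coscrA(H)$ as the set of $(\theta_0,\dotsc,\theta_n)$ admitting a strictly positive relation $\sum_j r_j e^{i\theta_j}=0$ and run a direct convex-geometry argument: on the interior of the complement you use that $0$ in the interior of $\mathrm{conv}\{e^{i\theta_j}\}$ has a strictly positive representation (perturb a nonnegative one by adding a small multiple of the all-ones weighting and absorbing the resulting drift back into the hull), and you finish the boundary by a limit. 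Your argument is more elementary and does not lean on Proposition~\ref{T:one}; on the other hand, the paper's argument produces a byproduct it uses later in Theorem~\ref{T:hyperplane}, namely that the cylinders corresponding to triangular faces of the Newton simplex cover $\coscrA(H)$, which your approach does not yield without extra work.

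One small gap in your boundary step: if \emph{every} $\theta_j$ sits at one of the two antipodal endpoints of the minimal closed arc, then ``perturbing the two extremal angles outward'' moves all the points, the complementary arc becomes the shorter one, and you land back inside $Z$ rather than in the interior of its complement. In that degenerate configuration, however, the point already lies in $\coscrA(H)$ itself: balance the total weight on the two antipodal groups. A way to phrase the boundary step that avoids case analysis is to note that $\overline{Z}$ is a convex body in the fundamental domain you fixed, so every point of $\partial Z$ is a limit of points outside $\overline{Z}$; since you have already shown that the exterior of $\overline{Z}$ lies in $\coscrA(H)$, this places $\partial Z$ in $\overline{\coscrA(H)}$ with no further casework.
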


\begin{proof}
 The set $Z$ is an open zonotope.
 We first show that no point of $Z$ lies in the coamoeba.
 Let $(\theta_0,\dotsc,\theta_n)\in Z$.
 The inequalities defining $Z$ are invariant under permuting coordinates and
 simultaneous translation.
 Thus we may assume that 
 \begin{equation}\label{Eq:ordered}
   0=\theta_0\ \leq\ \theta_1\ \leq\ \dotsb\ \leq\ \theta_n\ <\ \pi\,.
 \end{equation}
 If $1=x_0,x_1,\dotsc,x_n$ are complex numbers with $\arg(x_i)=\theta_i$ for all
 $i$, then they all have nonnegative imaginary parts.
 They cannot satisfy~\eqref{Eq:hyperplane_simple} for either all are positive real numbers or
 one has a non-zero imaginary part.
 Thus no point of $Z$ lies in the coamoeba of $H$.

 We use the phase limit set for the other inclusion.
 Let $\coscrA(H)$ be the coamoeba of $H$.
 Suppose that $\theta=(\theta_0,\theta_1,\dotsc,\theta_n)$ lies in the complement of $Z$,
 so that $|\theta_i-\theta_j|\geq \pi$ for some pair $i,j$.
 We may assume that $i=1$, $j=2$, and $\theta_0=0$.
 Then $(\theta_1,\theta_2)$ lies in the set
 \begin{equation}\label{Eq:line_coamoeba}
   \raisebox{-40pt}{\begin{picture}(102,88)(-20,-12)
     \put(-2,-2){\includegraphics{figures/closed_triangles.eps}}
     \put(-25,-2){$-\pi$} \put(-15,33){$0$} \put(-15,67){$\pi$}
     \put(-10,-12){$-\pi$} \put(32,-12){$0$} \put(66,-12){$\pi$}
   \end{picture}}
 \end{equation}
 which is the closure of the coamoeba of the line given by $1+x_1+x_2=0$.
 This is an initial form $\ini_w f$ of the polynomial~\eqref{Eq:hyperplane_simple}, and so
 by Proposition~\ref{T:one}, $\overline{\coscrA(H)}$ contains $\overline{\coscrA(\ini_w H)}$,
 which is the product of the coamoeba~\eqref{Eq:line_coamoeba} and the coordinate torus spanned
 by the last $n-2$ coordinates.
 This shows that $\theta\in\overline{\coscrA(\ini_w H)}\subset \overline{\coscrA(H)}$, which
 completes the proof.
\end{proof}

This proof also shows that these portions of the phase limit set, which are cylinders with base
the coamoeba of a line in a plane, cover the coamoeba of $H$.
These correspond to two-dimensional (triangular) faces of the Newton polytope of $f$.

\begin{example}\label{Ex:hyperplane}
 Here are three views of the closure of the coamoeba of the plane
 $1+x_1+x_2+x_3=0$ in $(\C^*)^3$, in the fundamental domain $[-\pi,\pi]^3$ for 
 $\U^3$.
 \[
   \includegraphics[height=3.5cm]{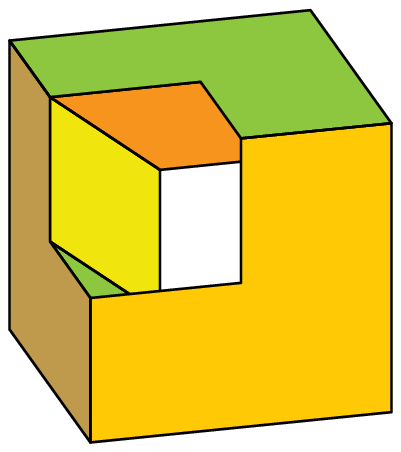} \quad\qquad
   \includegraphics[height=3.5cm]{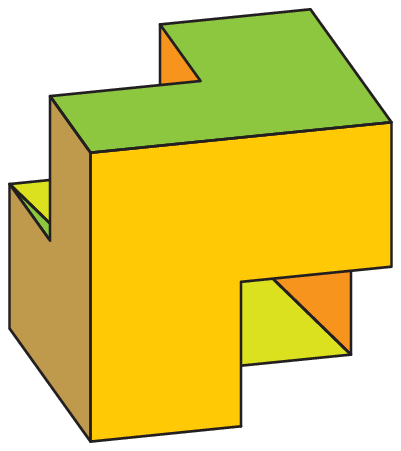} \quad\qquad  
   \includegraphics[height=3.5cm]{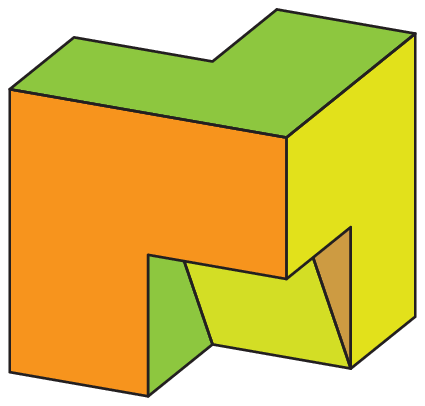} 
 \]
 In this domain, the zonotope consists of those $(\theta_1,\theta_2,\theta_3)$ such that all six 
 quantities $\theta_1$, $\theta_2$, $\theta_3$, $\theta_1-\theta_2$, $\theta_2-\theta_3$, and
 $\theta_3-\theta_1$ lie between $-\pi$ and $\pi$.
 Besides the 12 boundary quadrilaterals lying on the six coordinate planes defined by these
 inequalities, the phase limit set has four components which are cylinders over coamoebae of
 planar lines.
 \[
   \includegraphics[height=3.3cm]{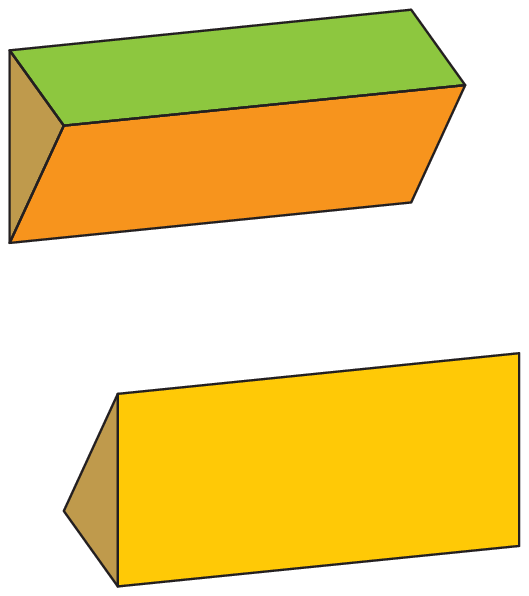} \quad\qquad
   \includegraphics[height=3.3cm]{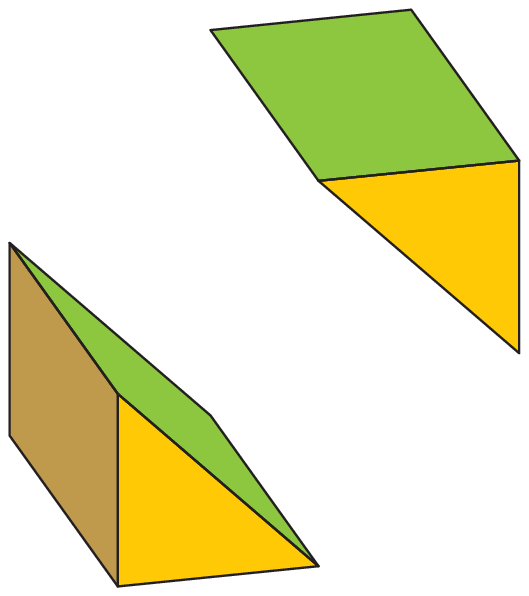} \quad\qquad
   \includegraphics[height=3.3cm]{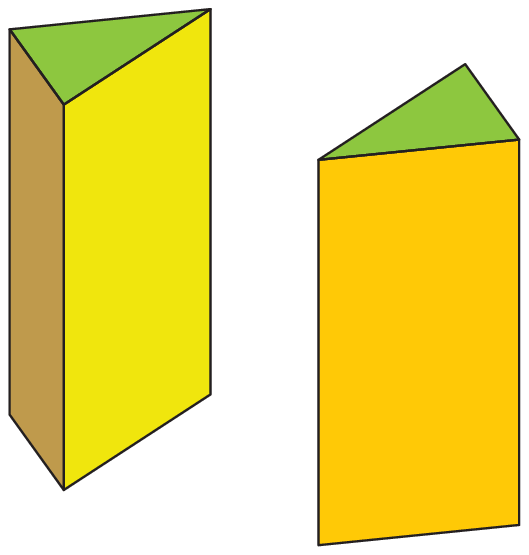} \quad\qquad
   \includegraphics[height=3.5cm]{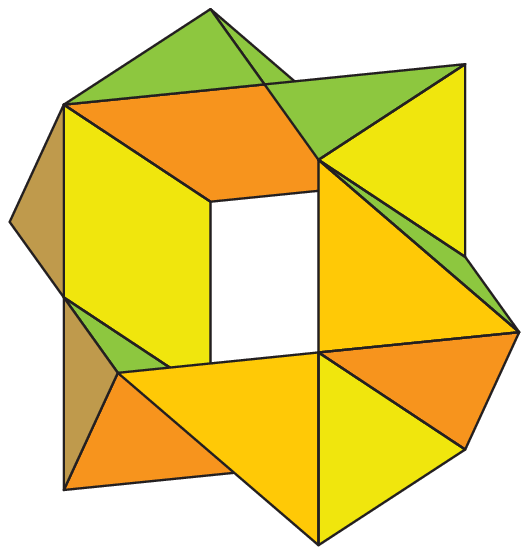} 
 \]
 The closures of these cylinders cover the coamoeba of $H$, as we showed in
 Lemma~\ref{L:hyperplane}. 
 Each of these cylinders is itself the coamoeba of a hyperplane.
 \qed
\end{example}

\begin{theorem}\label{T:hyperplane}
 Let $H\subset\T_N(\C)$ be a hyperplane whose defining polynomial $f$ has Newton polytope
 the simplex $\Delta$.
 If $\dim(\Delta)=1$, then $\coscrA(H)$ is a translate of a subtorus 
 of dimension $\rank(N){-}1$.
 Otherwise, $\coscrA(H)$ has dimension $\rank(N)$ and is a cylinder over the coamoeba of a 
 hyperplane in $(\C^*)^{\dim(\Delta)}$, which is the complement of a
 zonotope.

 The phase limit set of $H$ is the union of coamoebae of hyperplanes defined by the
 restrictions of $f$ to the non-vertex faces of $\Delta$.
 Those corresponding to triangles cover $\coscrA(H)$.

 The non-archimedean coamoeba of $H_\K\subset\T_N(\K)$ is the closure of the coamoeba of
 $H$ and is the complement in $\U_N$ of cylinder over an open zonotope of
 dimension $\dim(\Delta)$.

 The tropical variety $\trop(H)$ of $H_\K$ is the normal fan of $\Delta$ with its
 full-dimensional cones removed.
 Tropical reductions $H_\sigma$ for $\sigma$ a cone of $\trop(H)$ are the hyperplanes
 defined by the restriction of $f$ to the face of $\Delta$ dual to $\sigma$.
 The phase tropical variety $\Ptrop(H)$ of $H$ is the closure of its restriction to the
 cones of $\trop(H)$ corresponding to edges and two-dimensional faces of $\Delta$ and it
 is a pure-dimensional polyhedral complex of dimension $2\dim(H)$.
\end{theorem}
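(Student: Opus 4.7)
The plan is to dispatch the claims about the complex coamoeba before turning to the tropical side. The dichotomy in $\dim(\Delta)$ is immediate from~\eqref{Eq:hyperplane_simple}: in adapted coordinates, $H$ is the product of a standard hyperplane in $(\C^*)^{\dim(\Delta)}$ and a coordinate subtorus of dimension $\rank(N)-\dim(\Delta)$. When $\dim(\Delta)=1$ the standard factor is a single point, so $\coscrA(H)$ is the asserted translate of a codimension-one subtorus; otherwise, Lemma~\ref{L:hyperplane} identifies the coamoeba of the standard factor as the full-dimensional complement of an open zonotope in $\U^{\dim(\Delta)}$, making $\coscrA(H)$ the claimed $\rank(N)$-dimensional cylinder. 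The phase limit set is then handled by Proposition~\ref{T:one} together with the observation that, for $f$ supported on $\Delta$, the initial form $\ini_w f$ coincides up to a monomial with the restriction $f|_F$ of $f$ to the face $F$ on which $\bm\mapsto\langle\bm,w\rangle$ is minimized, which is a pure monomial exactly when $F$ is a vertex. The ``triangles cover $\coscrA(H)$'' assertion is the closing step of the proof of Lemma~\ref{L:hyperplane}: every point outside the zonotope $Z$ is placed there in the closure of the coamoeba of a two-dimensional subsimplex restriction of $f$.

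The non-archimedean coamoeba and tropical variety now fall out quickly. Theorem~\ref{T:complex_case} gives $\nca(H_\K)=\overline{\coscrA(H)}$, which by the previous paragraph is the complement of the open zonotope cylinder. The standard description of the tropicalization of a hypersurface identifies $\trop(H_\K)$ with the codimension-one skeleton of the normal fan of $\Delta$, and the correspondence between cones $\sigma$ and faces $F$ of $\Delta$ yields $H_\sigma$ as the hyperplane defined by $f|_F$.

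For the phase tropical variety I apply Theorem~\ref{T:phaseTropical} to write $\Ptrop(H)$ as the union, over non-vertex faces $F\subset\Delta$ of dimension $e$, of the pieces $\sigma_F\times\overline{\coscrA(H_{\sigma_F})}$. Here $\sigma_F$ has dimension $\rank(N)-e$, while the previous paragraphs, applied to $H_{\sigma_F}$ (itself a hyperplane with Newton polytope the simplex $F$), give $\overline{\coscrA(H_{\sigma_F})}$ dimension $\rank(N)-1$ when $e=1$ and $\rank(N)$ when $e\ge 2$. Thus the piece for $F$ has dimension $2\dim(H)$ precisely when $e\in\{1,2\}$, and dimension $2\rank(N)-e<2\dim(H)$ when $e\ge 3$. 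To absorb the latter pieces in the closures of the former, I invoke the covering-by-triangles statement of the first paragraph \emph{recursively} applied to each $H_{\sigma_F}$: its closed coamoeba equals $\bigcup_{T\subset F,\,\dim T=2}\overline{\coscrA(H_{\sigma_T})}$, and since $T\subset F$ gives $\sigma_F\subset\sigma_T$, the piece for $F$ embeds in $\bigcup_{T\subset F,\,\dim T=2}\sigma_T\times\overline{\coscrA(H_{\sigma_T})}$. Hence $\Ptrop(H)$ is the closure of its restriction to cones dual to edges and two-dimensional faces of $\Delta$ and is pure of dimension $2\dim(H)$. The main technical point is precisely this recursive application, which is legitimate because each $H_{\sigma_F}$ is again a hyperplane in the sense defined above.
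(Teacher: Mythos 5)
Your proposal is correct and follows the paper's proof essentially verbatim: the complex coamoeba, phase limit set, and non-archimedean statements are reduced to Lemma~\ref{L:hyperplane}, Proposition~\ref{T:one}, and Theorem~\ref{T:complex_case}, and for $\Ptrop(H)$ both arguments pair the dimension count from Theorem~\ref{T:phaseTropical} (pieces dual to $e$-faces have dimension $2\rank(N)-\max(e,2)$) with the recursive ``triangles cover'' consequence of Lemma~\ref{L:hyperplane} to absorb the lower-dimensional pieces into the closures of those over ridges. The only difference is that the paper disposes of the pre-$\Ptrop$ claims in a single sentence while you spell out the face-restriction computation for initial forms; the substance is identical.
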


\begin{proof}
 These statements are either immediate from the definitions or from
 Lemma~\ref{L:hyperplane}, or are well-known (e.g.\ the description of 
 $\trop(H)$), except perhaps those concerning $\Ptrop(H)$.

 By Theorem~\ref{T:phaseTropical}, $\dim\Ptrop(H)=2\dim(H)$ with the pieces of the
 decomposition~\eqref{Eq:PTV} corresponding to maximal cones $\sigma$ of $\trop(X)$ having
 maximal dimension. 
 Let $\sigma$ be a non-maximal cone dual to a face $\calF$ of $\Delta$ 
 of dimension two or greater.
 Then $\dim(\sigma)=\rank(N)-\dim(\calF)$, and the coamoeba of the tropical reduction
 $H_\sigma$ has dimension $\rank(N)$. 
 Thus the piece of $\Ptrop(X)$ corresponding to $\sigma$ has dimension 
 $2\rank(N)-\dim(\calF)$.
 This is maximized with maximal value $2\rank(N)-2=2\dim(H)$ when $\calF$ a triangular face of
 $\Delta$.

 \demph{Ridges} are faces of $\trop(X)$ corresponding to triangular faces of $\Delta$.
 Every non-maximal face of $\trop(X)$ is contained in a ridge, and 
 by the computation in the proof of Lemma~\ref{L:hyperplane} the coamoeba of the tropical 
 reduction corresponding to a non-maximal face $\sigma$ is the union of the coamoebae of
 tropical reductions corresponding to the ridges containing $\sigma$.
 This shows the statement about closure and completes the proof of the theorem.
\end{proof}

\subsection{Coamoeba of simple hypersurfaces}\label{S:hypersurface}

A hypersurface $H\subset\T_N(\C)$ is \demph{simple} if the support of its 
defining polynomial $f$ is an affinely
independent subset of $M$. 
Multiplying $f$ by a scalar and a monomial we may assume that it is \demph{monic} in that it 
contains the term 1, and therefore has the form
 \begin{equation}\label{Eq:simpleHypersurface}
  f\ =\ 1\ +\ a_1\xi^{\bm_1}\ +\ 
         a_2\xi^{\bm_2}\ +\ \dotsb\ +\ a_n\xi^{\bm_n}\,,
   \qquad n\leq \rank(N)\quad a_i\neq 0\,,
 \end{equation}
 where $\calA:=\{\bm_1,\dotsc,\bm_n\}$ is a  linearly independent set.
 Set $a:=(a_1,\dotsc,a_n)\in(\C^*)^n$, write $T_a$ for the translation in $(\C^*)^n$ by
 $a$, and let $\varphi_\calA\colon\T_N\to\T^n$ be the homomorphism
 \begin{equation}\label{Eq:varphiA}
   \varphi_\calA\ \colon\ x\ \longmapsto\ 
     (x(\bm_1),\,x(\bm_2),\,\dotsc,\,x(\bm_n))\,.
 \end{equation}
Then $H$ is the inverse image of the standard hyperplane~\eqref{Eq:hyperplane_simple} along the
map $T_a\circ\varphi_\calA$.

Let $\calA^\perp\subset N$ be 
$\{n\in N\mid \langle \bm,\bn\rangle=0\ \mbox{for all}\ \bm\in\calA\}$.
Then $\T_{\calA^\perp}$ is the connected component of the identity of the kernel of
$\varphi_\calA$, and so $\T_{\calA^\perp}$ acts freely on $H$.
The quotient is a simple hypersurface in $\T_N/\T_{\calA^\perp}$, which 
has the same defining polynomial~\eqref{Eq:simpleHypersurface} as $H$, but in the coordinate
ring of  $\T_N/\T_{\calA^\perp}$, which is $\C[\sat(\calA)]$.
Here, $\sat(\calA)$ is the saturation of the $\Z$-span $\Z\calA$ of $\calA$, those points
of $M$ which lie in the $\Q$-linear span of $\calA$.
The map $\varphi_\calA\colon \T_N/\T_{\calA^\perp}\to\T^n$ is surjective of degree the
cardinality of its kernel, $\Hom(\sat(\calA)/\Z\calA,\C^*)$, which we will call 
\defcolor{$\nvol(\calA)$}.
When $\sat(\calA)=\Z^n$, $\nvol(\calA)$ is the Euclidean volume of the convex hull of
$\calA\cup\{0\}$, multiplied by $n!$.

These maps $T_a$ and $\varphi_\calA$ induce linear maps on $\R_N,\R^n$ (via the
$\Log$ map defining amoebae), and $\U_N, \U^n$ (via the argument map defining coamoebae).
A consequence of this discussion is the following description of the amoeba and coamoeba
of a simple hypersurface, showing that its coamoeba may be understood largely in terms of
polyhedral combinatorics. 

\begin{lemma}\label{L:simpleH}
 Let $H$ be a simple hypersurface in $\T_N(\C)$.
\begin{enumerate}
 \item The amoeba of $H$ is the pullback of the amoeba of the standard
       hyperplane~$\eqref{Eq:hyperplane_simple}$ along the map $T_a\circ\varphi_\calA$.

 \item The coamoeba of $H$ is the pullback of the coamoeba of the standard
       hyperplane~$\eqref{Eq:hyperplane_simple}$ along the map $T_a\circ\varphi_\calA$.
       Its closure is equal to $\U_N\setminus\varphi_\calA^{-1}(T_{-a}Z)$, where 
       $Z$ is the zonotope~$\eqref{Eq:Zonotope}$.
       In particular, the number of components in the complement of $\coscrA(H)$ in $\U_N$
       is equal to $\nvol(\calA)$.
 \end{enumerate}
\end{lemma}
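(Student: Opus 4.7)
The plan is to reduce the whole statement to Lemma~\ref{L:hyperplane} via the fact, established just before the lemma, that $H=(T_a\circ\varphi_\calA)^{-1}(H_0)$, where $H_0$ is the standard hyperplane~\eqref{Eq:hyperplane_simple}. The core observation is that $\Log\colon\T_N(\C)\to\R_N$ and $\Arg\colon\T_N(\C)\to\U_N$ are real Lie group homomorphisms, so they intertwine $T_a\circ\varphi_\calA$ with the affine maps $T_{\Log(a)}\circ\bar\varphi_\calA\colon\R_N\to\R^n$ and $T_{\Arg(a)}\circ\bar\varphi_\calA\colon\U_N\to\U^n$, where $\bar\varphi_\calA$ is induced by viewing $\calA$ as a tuple of linear functionals on $N$.

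For part (1) and the first assertion of (2), the key point is that $\bar\varphi_\calA$ is surjective on both $\R_N\to\R^n$ and $\U_N\to\U^n$, because $\calA$ is $\Z$-linearly independent. Under the isomorphism $\T_N(\C)\simeq\R_N\times\U_N$, a point $w\in\R_N$ lies in $\scrA(H)$ iff there is $\theta\in\U_N$ with $\bigl(T_{\Log(a)}\bar\varphi_\calA(w),\,T_{\Arg(a)}\bar\varphi_\calA(\theta)\bigr)\in H_0$, and surjectivity of $\bar\varphi_\calA$ on the $\U$-side collapses this to $T_{\Log(a)}\bar\varphi_\calA(w)\in\scrA(H_0)$; the identical argument with $\Log$ and $\Arg$ swapped gives the pullback description of $\coscrA(H)$.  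For the closure, $\bar\varphi_\calA$ is a surjective Lie group homomorphism, hence a submersion and an open map, so preimage commutes with closure; combining this with $\overline{\coscrA(H_0)}=\U^n\setminus Z$ from Lemma~\ref{L:hyperplane} yields $\overline{\coscrA(H)}=\U_N\setminus\varphi_\calA^{-1}(T_{-a}Z)$.

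For the component count I would factor $\bar\varphi_\calA\colon\U_N\to\U^n$ as
\[
   \U_N\ \twoheadrightarrow\ \U_N/\U_{\calA^\perp}\ \xrightarrow{\ p\ }\ \U^n,
\]
where the first map is a principal bundle with connected fiber $\U_{\calA^\perp}$ and $p$ is a finite covering.  On character lattices, $p$ is dual to $\Z^n\xrightarrow{\sim}\Z\calA\hookrightarrow\sat(\calA)$, so its degree is $|\sat(\calA)/\Z\calA|=\nvol(\calA)$.  Because $Z$ is defined by strict linear inequalities that prevent wraparound, it sits as a convex open subset of a fundamental domain of $\U^n$ and is therefore simply connected; hence $p^{-1}(T_{-a}Z)$ has exactly $\nvol(\calA)$ components, and each lifts to a single component in $\U_N$ because the intermediate fiber $\U_{\calA^\perp}$ is connected.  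The main obstacle is precisely this last bookkeeping step: correctly pinning down the degree of $p$ as $\nvol(\calA)$ in the non-unimodular case.  All other pieces of the proof are routine consequences of the functoriality of $\Log$ and $\Arg$ together with Lemma~\ref{L:hyperplane}.
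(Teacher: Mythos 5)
Your proposal is correct and follows exactly the route the paper intends: the paper states Lemma~\ref{L:simpleH} as ``a consequence of this discussion,'' namely that $H=(T_a\circ\varphi_\calA)^{-1}(H_0)$ with $\Log$ and $\Arg$ intertwining $T_a\circ\varphi_\calA$ with affine maps on $\R_N$, $\R^n$ and $\U_N$, $\U^n$, together with the preceding identification of $\nvol(\calA)$ as the degree of $\varphi_\calA$ modulo $\T_{\calA^\perp}$. You have filled in the routine verifications (surjectivity, openness for compatibility of preimage and closure, the covering-space count via simple connectedness of the zonotope) correctly, so there is no gap.
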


\begin{remark}
 As a simple hypersurface $H$ and its coamoeba are pullbacks along $T_a\circ\varphi_\calA$ of
 the standard hyperplane and its coamoeba, the statements of Theorem~\ref{T:hyperplane} hold
 for $H$, with the obvious modifications as expressed in Lemma~\ref{L:simpleH}.
 We leave the precise formulation to the reader, but note that the non-archimedean coamoeba and
 phase tropical variety of $H$ have a description that is completely combinatorial, up to
 phase shifts coming from $T_a$.
 In particular, $\Ptrop(H)$ is a polyhedral complex of pure dimension $2\dim(H)$.
\end{remark}

\begin{example}\label{Ex:simple_curve}
 Consider the simple curve $C$ in $(\C^*)^2$ defined by
\[
   f\ :=\ 1\ +\ ax^2y\ +\ bxy^2
   \qquad a,b\in\C^*\,.
\]
 Then $\calA=\{(2,1),(1,2)\}$ and $C$ is the pullback of the standard line of
 Example~\ref{Ex:AlineP2} along the composition $T_{(a,b)}\circ\varphi_{\calA}$.

 We ignore the rotation and consider only the pullback along $\varphi_\calA$.
 On $\U^2$, $\varphi_\calA$ is the map
\[
   (\alpha,\beta)\ \longmapsto\ (2\alpha+\beta,\, \alpha+2\beta)\,.
\]
 We lift to the universal cover $\R^2$ of $\U^2$ to determine the pullback.
 Figure~\ref{F:pullback} shows the image of the fundamental domain of $\U^2$
 is shaded and superimposed on nine copies of the coamoeba~\eqref{Eq:two_triangles} of the
 standard line, in the universal cover $\R^2$.
 \begin{figure}[htb]
 \[
   \raisebox{-17pt}{\begin{picture}(40,40)
     \put(0,0){\includegraphics{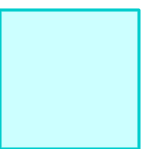}}
     \put(-20,-2){$-\pi$}  \put(-11,37){$\pi$}
     \put(-12,-11){$-\pi$}  \put(36, -11){$\pi$}
    \end{picture}}
    \quad
     \xrightarrow{\quad \varphi_\calA\quad }
    \quad
   \raisebox{-60.pt}{\includegraphics{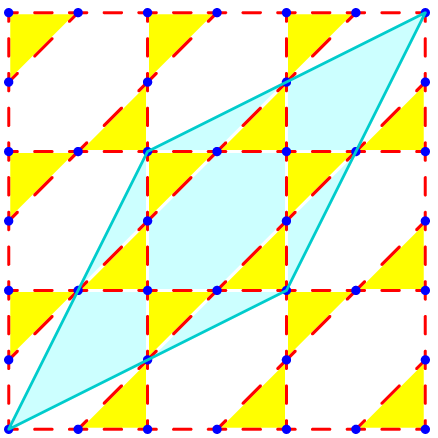}}
   \qquad\qquad\quad
   \raisebox{-45pt}{\includegraphics{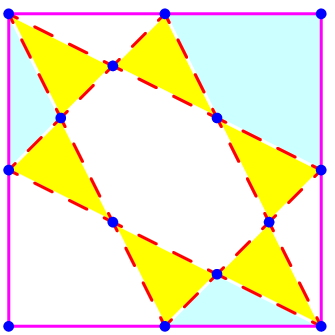}}
 \]
 \caption{The pullback and the coamoeba}
 \label{F:pullback}
\end{figure}
 Thus the pullback is the coamoeba shown in a single fundamental domain
 on the right, where we have shaded one of the three components of its complement.
 \qed
\end{example}

\subsection{Simple varieties}\label{SS:simple}

A subvariety $X\subset\T_N(\C)$ is \demph{simple} if it is the pullback along a surjective
homomorphism of a product of hyperplanes.
That is, there are hyperplanes  $H_i\subset(\C^*)^{n_i}$ for $1\leq i\leq m$ and
a surjective homomorphism 
$\varphi\colon\T_N(\C)\twoheadrightarrow(\C^*)^{n_1}\times\dotsb\times(\C^*)^{n_m}$
such that 
 \begin{equation}\label{Eq:XIsPullBack}
   X\ =\ \varphi^{-1}\bigl( H_1\times H_2\times\dotsb\times H_m\bigr)\,
 \end{equation}
Thus, up to a finite cover as in Example~\ref{Ex:simple_curve}, its coamoeba is the product of
coamoebae of hyperplanes, and therefore is an object from polyhedral combinatorics.

Simple subvarieties have a combinatorial characterization.
A polynomial $f\in\K[M]$ is \demph{monic} if its support includes $0\in M$.
The \demph{reduced support} of a monic polynomial consists of the non-zero elements of its
support. 
We note that any subscheme $X\subset\T_N(\C)$ may be defined by monic polynomials---simply
divide each polynomial defining $X$ by one of its terms, which is invertible in $\K[M]$.

\begin{proposition}\label{Prop:simple}
 A subscheme $X\subset\T_N(\C)$ is a simple subvariety if and only if it is defined by 
 monic polynomials whose reduced supports are linearly independent.
\end{proposition}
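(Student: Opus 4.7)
The plan is to exploit the standard duality between homomorphisms of tori over $\C$ and maps of character lattices, and to recognize the linear independence condition as exactly the surjectivity condition for the associated map.

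First, I would set up the correspondence. A homomorphism $\varphi\colon\T_N(\C)\to\T_{N'}(\C)$ is determined by its dual $\varphi^*\colon M'\to M$ on character lattices, and $\varphi$ is surjective if and only if $\varphi^*$ is injective (equivalently, the images of a basis of $M'$ are $\Z$-linearly independent in $M$, which is the same as $\Q$-linearly independent). If $N' = \bigoplus_{i=1}^m \Z^{n_i}$, then $M'$ has the standard basis $\{e_{i,j}\}$, and the surjectivity of $\varphi$ translates into the $\Z$-linear independence of $\{\bm_{i,j}:=\varphi^*(e_{i,j})\}\subset M$.

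For the forward direction, suppose $X=\varphi^{-1}(H_1\times\cdots\times H_m)$ with $\varphi$ surjective and each $H_i\subset(\C^*)^{n_i}$ a hyperplane. After rescaling the coordinates on each factor as in Subsection~\ref{S:hypersurface}, $H_i$ is cut out by a standard equation $1+a_{i,1}z_{i,1}+\cdots+a_{i,n_i}z_{i,n_i}=0$. Because these equations live in disjoint variable sets, the ideal of $H_1\times\cdots\times H_m$ in $\C[z_{i,j}]$ is generated by them, and pulling back via $\varphi$ (sending $z_{i,j}\mapsto \xi^{\bm_{i,j}}$) yields generators $f_i=1+\sum_j a_{i,j}\xi^{\bm_{i,j}}$ for the ideal of $X$. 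Each $f_i$ is monic with reduced support $\{\bm_{i,j}\}_j$, and the union of all reduced supports is $\{\bm_{i,j}\}_{i,j}$, which is linearly independent by surjectivity of $\varphi$.

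For the reverse direction, suppose $X$ is defined by monic polynomials $f_i=1+\sum_{j=1}^{n_i} a_{i,j}\xi^{\bm_{i,j}}$ whose reduced supports together form a linearly independent subset of $M$. Set $N':=\bigoplus_{i=1}^m\Z^{n_i}$ and define $\varphi^*\colon M'\to M$ by $e_{i,j}\mapsto \bm_{i,j}$; this is injective by hypothesis, so the induced $\varphi\colon\T_N(\C)\twoheadrightarrow\prod_i(\C^*)^{n_i}$ is surjective. Taking $H_i\subset(\C^*)^{n_i}$ to be the (standard) hyperplane $1+\sum_j a_{i,j}z_{i,j}=0$—whose Newton polytope is the unimodular simplex $\{0,e_{i,1},\dots,e_{i,n_i}\}$—we get $\varphi^{-1}(H_1\times\cdots\times H_m)=V(f_1,\dots,f_m)=X$.

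The only real subtlety is the scheme-theoretic matching between $X$ and the pullback, but this is automatic because the ideal of the product of hyperplanes in disjoint variables is the sum of their defining ideals, and pulling back commutes with this sum. The rest is purely a translation of the linear-independence hypothesis into a statement about the dual lattice map, so I expect no substantive obstacles beyond being careful about the identification of characters with their exponent vectors.
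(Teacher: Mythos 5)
Your proof is correct and follows essentially the same route as the paper's: both directions pass through the dual picture in which surjectivity of $\varphi$ corresponds to injectivity of $\varphi^*$ (the paper calls this map $\Phi$ and uses $\varphi_\calA$ for the induced torus map), and both directions transport linear independence of reduced supports back and forth along this map. The extra remarks you add — normalizing each $H_i$ to standard form first, and observing that the ideal of a product in disjoint variables is the sum of the pullback ideals — are sound and merely make explicit points the paper leaves implicit.
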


\begin{proof}
 Given a simple subvariety $X\subset\T_N(\C)$, let 
 $H_i\subset(\C^*)^{n_i}$ for $i=1,\dotsc,m$ be hyperplanes and $\varphi$ the surjection 
 $\T_N(\C)\twoheadrightarrow(\C^*)^{n_1}\times\dotsb\times(\C^*)^{n_m}$ so that 
 $X$ is the pullback of the product of the $H_i$.
 For each $i=1,\dotsc,m$, let $g_i$ be a monic polynomial defining the hyperplane
 $H_i\subset(\C^*)^{n_i}$ and set $f_i:=\varphi^*(g_i)$.
 Then $f_1,\dotsc,f_m$ are monic polynomials defining $X$.

 For each $i=1,\dotsc,n$, the reduced support of $g_i$ is a linearly independent
 subset of $\Z^{n_i}$.
 The reduced support $\calA_i$ of $f_i$ is the image of $\calA'_i\subset\Z^{n_i}$ under 
 the map
\[
   \Phi\ \colon\ \Z^{n_1}\oplus \Z^{n_2}\oplus\dotsb\oplus \Z^{n_m}
    \ \lhook\joinrel\relbar\joinrel\rightarrow\  M
\]
 which corresponds to the surjection $\varphi$.
 It follows that the reduced supports $\calA_1,\dotsc,\calA_m$ of the 
 monic polynomials $f_1,\dotsc,f_m$ defining $X$ are linearly independent.

 For the other direction, suppose that we have  monic polynomials  $f_1,\dotsc,f_m$ defining
 $X$ whose reduced supports $\calA_1,\dotsc,\calA_m$ are linearly independent.
 If  $\calA$ is the union of these reduced supports, then we obtain a surjective homomorphism
 $\varphi_\calA$~\eqref{Eq:varphiA} from $\T_N(\C)$ onto the product of 
 tori $(\C^*)^{n_0}\times(\C^*)^{n_1}\times\dotsb\times(\C^*)^{n_m}$ where 
 $n_i$ is the cardinality of $\calA_i$.
 Furthermore, each polynomial $f_i$ is the pullback of a polynomial on 
 $(\C^*)^{n_i}$ defining a hyperplane $H_i$, and so we obtain the
 description~\eqref{Eq:XIsPullBack}, which completes the proof.
\end{proof}

\begin{remark}
 Up to a finite cover (and/or a product with a torus) the coamoeba of a simple variety is the
 product of coamoebae of hyperplanes.
 By Proposition~\ref{Prop:simple}, nonempty initial schemes of simple subvarieties are simple
 subvarieties.
 Thus the structures of coamoebae, non-archimedean coamoebae, and phase tropical varieties of
 simple varieties may be understood in completely combinatorial terms by adapting the
 descriptions of Theorem~\ref{T:hyperplane} (to products of hyperplanes) and the effects of
 pullbacks along surjective homomorphisms as discussed in Subsection~\ref{S:hypersurface}.
 In particular, the phase tropical variety of a simple variety $X$ is a polyhedral complex of
 pure dimension $2\dim(X)$. 
\end{remark}

\subsection{Tropically simple varieties}\label{S:TSV}

 A $\K$-subscheme $X$ of $\T_N$ is \demph{tropically simple} if each of its tropical reductions
 $X_w$ is a simple subvariety of $\T_N(\C)$.

\begin{theorem}
 Let $X\subset\T_N$ be a tropically simple variety.
 Its non-archimedean coamoeba $\nca(X)$ is a union of polyhedra of dimension 
 $\dim(X) +1$.
 Its phase tropical variety $\Ptrop(X)$ is a polyhedral complex of pure dimension
 $2\dim(X)$.
\end{theorem}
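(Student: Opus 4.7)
The plan is to reduce both claims to a polyhedral analysis of the individual pieces $\overline{\coscrA(X_\sigma)}$ appearing in the decompositions of $\nca(X)$ and $\Ptrop(X)$. By Theorem~\ref{T:nca}, $\nca(X)$ is the finite union of such closures over minimal faces $\sigma$ of a polyhedral complex $\scrC$ with support $\trop(X)$, while Theorem~\ref{T:phaseTropical} decomposes $\Ptrop(X)$ as $\bigsqcup_{\sigma\in\scrC}\relint(\sigma)\times\overline{\coscrA(X_\sigma)}$. The tropical simplicity hypothesis enters through Proposition~\ref{Prop:simple}: each $X_\sigma$ is a simple subvariety of $\T_N(\C)$, hence cut out by monic polynomials with linearly independent reduced supports $\calA_1,\ldots,\calA_m$, and therefore realized as $\varphi_\calA^{-1}(T_a(H_1\times\cdots\times H_m))$ for standard hyperplanes $H_i\subset(\C^*)^{n_i}$ with $n_i=|\calA_i|$.

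The first step is to establish the polyhedral structure of each $\overline{\coscrA(X_\sigma)}$. By Lemma~\ref{L:hyperplane}, $\overline{\coscrA(H_i)}$ is the complement in $\U^{n_i}$ of an open zonotope, which is visibly a finite union of closed convex polytopes. Their product $\prod_i\overline{\coscrA(H_i)}$ is then a union of polyhedra in $\U^{n_1}\times\cdots\times\U^{n_m}$. Because $\varphi_\calA$ lifts to an integer-linear map of universal covers $\R_N\to\R^{n_1+\cdots+n_m}$, and translation by $T_a$ in the argument torus is affine, the preimage $\overline{\coscrA(X_\sigma)}$ is itself a finite union of closed polyhedra. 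Taking the union over minimal $\sigma\in\scrC$ yields the polyhedral description of $\nca(X)$; the advertised dimension count then follows by combining $\dim\sigma$ with the dimension of a maximal polyhedron in the zonotope-complement piece of $X_\sigma$, using that $X$ is irreducible and hence all $X_\sigma$ have the same complex dimension as $X$.

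For $\Ptrop(X)$, pure-dimensionality $2\dim(X)$ combines the dimension estimate from the proof of Theorem~\ref{T:phaseTropical} with the ridge-covering statement of Theorem~\ref{T:hyperplane}. Over a maximal face $\sigma\in\scrC$, where $\dim\sigma=\dim(X)$, the scheme $X_\sigma$ is a union of $\T_{\langle\sigma\rangle}$-orbits and its coamoeba closure is a finite union of cosets of $\U_{\langle\sigma\rangle}$, so $\sigma\times\overline{\coscrA(X_\sigma)}$ has dimension exactly $2\dim(X)$. For a non-maximal face $\tau\in\scrC$, Proposition~\ref{Prop:local} identifies the local fan $\scrC_w$ at any $w\in\tau$ with the Gr\"obner-tropical fan of $X_\tau$, so the tropical reductions $X_\rho$ for faces $\rho\supset\tau$ are exactly the initial schemes of $X_\tau$. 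Applying the ridge-covering property of Theorem~\ref{T:hyperplane} to each hyperplane factor of $X_\tau$ and pulling back through $\varphi_\calA$ shows that every polyhedron in $\tau\times\overline{\coscrA(X_\tau)}$ is a face of some polyhedron $\rho\times\overline{\coscrA(X_\rho)}$ with $\rho$ maximal, giving purity.

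The main obstacle is global coherence: one must verify that the polyhedra arising from distinct tropical reductions fit together as a bona fide polyhedral complex rather than a merely set-theoretic union. The tools are Proposition~\ref{Prop:local} (tropical reductions at incident faces are related by initial-scheme passage) and Proposition~\ref{P:trivialization} (any ambiguity from the choice of section $\tau$ translates each piece by a single element that is coherent along each minimal face). Together they ensure that taking closures inside $\Gamma_N\times\U_N$ adds precisely the expected lower-dimensional cells indexed by faces $\rho\supset\sigma$, with no spurious intersections; combinatorially, the incidence relation on polyhedra in $\Ptrop(X)$ is governed by that of the Newton polytopes of the defining monic polynomials together with the face lattice of $\scrC$. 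Projection to the $\U_N$ factor then transfers the polyhedral-complex structure to $\nca(X)$, completing the argument.
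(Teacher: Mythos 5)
The paper itself leaves this theorem unproved---the statement is the last item before the bibliography in this draft---so there is no written proof to compare against; the remarks in Subsection~\ref{S:TSV} and just before it indicate that the authors intended exactly the reduction you carry out. Your plan (decompose via Theorems~\ref{T:nca} and~\ref{T:phaseTropical}, identify each $X_\sigma$ as a pullback along $T_a\circ\varphi_\calA$ of a product of standard hyperplanes via Proposition~\ref{Prop:simple}, invoke Lemma~\ref{L:hyperplane} for the zonotope-complement description, and pull polyhedrality back through the affine map on universal covers) is the correct shape, and it does yield cleanly that $\nca(X)$ and $\Ptrop(X)$ are finite unions of polyhedra.

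The gap is in the dimension count for $\nca(X)$, which you assert ``follows by combining $\dim\sigma$ with the dimension of a maximal polyhedron in the zonotope-complement piece'' without carrying it out. If you do carry it out, it does not give $\dim(X)+1$. Writing $X_\sigma=\varphi^{-1}(T_a(H_1\times\cdots\times H_m))$ with $H_i\subset(\C^*)^{n_i}$ and all $n_i\geq 2$, Lemma~\ref{L:hyperplane} gives $\dim\coscrA(H_i)=n_i$, so $\dim\overline{\coscrA(X_\sigma)} = \dim\ker(\Arg\varphi)+\sum_i n_i = \rank(N)=\dim(X)+m$, where $m$ is the codimension. This equals $\dim(X)+1$ only when $m=1$. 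Concretely, $X\subset(\K^*)^4$ cut out by $1+x_1+x_2=0$ and $1+x_3+x_4=0$ is tropically simple of dimension $2$, but $\nca(X)$ is the product of two triangle-pair coamoebae and is full-dimensional, dimension $4$, not $3$. So the first claim of the theorem needs $\rank(N)$ in place of $\dim(X)+1$ (or a restriction to hypersurfaces), and your dimension step as written would fail. For the $\Ptrop(X)$ purity claim, the conclusion is plausible but you invoke the ``ridge-covering'' of Theorem~\ref{T:hyperplane}, which is proved only for a single hyperplane; you need the product version---that every point of $\overline{\coscrA(X_\sigma)}$ lies in $\overline{\coscrA(X_\rho)}$ for some $\rho\supset\sigma$ with $\dim(\rho)+\dim\coscrA(X_\rho)=2\dim(X)$---which does follow by applying the triangle-covering of Lemma~\ref{L:hyperplane} factor by factor and transporting through $\varphi_\calA$ and Proposition~\ref{Prop:local}, but this should be stated and checked rather than cited as if it were the single-factor result.
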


\bibliographystyle{amsplain}
\bibliography{bibl}
\label{sec:biblio}

\end{document}